%

\documentclass{amsart}
\usepackage{amssymb}
\usepackage{amsmath}
\usepackage{stmaryrd}
\usepackage{amssymb}
\usepackage{array,multirow,makecell}
\usepackage[utf8]{inputenc}
\usepackage{amsmath}
\usepackage{amsthm}
\usepackage{amssymb}
\usepackage{amsfonts}
\usepackage{enumitem}
\usepackage{setspace}
 \singlespacing
\onehalfspacing
\usepackage[bookmarks=true,hyperindex, pdftex,colorlinks,citecolor=blue]{hyperref}
\usepackage{centernot} 
\usepackage{marginnote} 
\usepackage[left=3cm,right=3cm,top=2cm,bottom=2cm]{geometry}
\usepackage{bbm}
\usepackage[all]{xy}
\usepackage{tikz}
\usetikzlibrary{arrows}
\usetikzlibrary{shapes,decorations}
\usetikzlibrary{positioning}







\def\<{\langle}
\def\>{\rangle}



\theoremstyle{plain}
\textheight 22.5truecm \textwidth 14.5truecm
\setlength{\oddsidemargin}{0.35in}\setlength{\evensidemargin}{0.35in}

\setlength{\topmargin}{-.5cm}

\newtheorem{thm}{Theorem}[section]
\newtheorem{lemma}[thm]{Lemma}
\newtheorem{prop}[thm]{Proposition}

\theoremstyle{definition}
\newtheorem{defn}[thm]{Definition}

\theoremstyle{remark}

\numberwithin{equation}{section}

\begin{document}

\title{disjoint strong transitivity of composition operators}
\author{Noureddine Karim, Otmane Benchiheb and Mohamed Amouch}

\address{Noureddine Karim, Otmane Benchiheb and Mohamed Amouch,
 Chouaib Doukkali University.
Department of Mathematics, Faculty of science
Eljadida, Morocco}
\email{noureddinekarim1894@gmail.com}
\email{otmane.benchiheb@gmail.com}
\email{amouch.m@ucd.ac.ma}
\keywords{Composition operators, disjoint hypercyclic operators, strong disjoint hypercyclic operators.}
\subjclass[2010]{47A16, 46A99.}

\begin{abstract}
A Furstenberg family $\mathcal{F}$ is a collection of infinite subsets of the set of positive integers such that if $A\subset B$ and $A\in \mathcal{F}$, then $B\in \mathcal{F}$. For a Furstenberg family $\mathcal{F}$, finitely many operators $T_1,...,T_N$ acting on a common topological vector space $X$ are said to be
disjoint $\mathcal{F}$-transitive if for every non-empty open subsets
$U_0,...,U_N$ of $X$ the set $\{n\in \mathbb{N}:\  U_0 \cap T_1^{-n}(U_1)\cap...\cap T_N^{-n}(U_N)\neq\emptyset\}$
belongs to $\mathcal{F}$. In this paper, depending on the topological properties of $\Omega$, we characterize the disjoint $\mathcal{F}$-transitivity of $N\geq2$ composition operators $C_{\phi_1},\ldots,C_{\phi_N}$ acting on the space $H(\Omega)$ of holomorphic maps on a domain $\Omega\subset \mathbb{C}$ by establishing a necessary and sufficient condition in terms of their symbols $\phi_1,...,\phi_N$.
\end{abstract}
\maketitle
\section{Introduction}
Throughout this paper, $\mathbb{C}$ will represent the complex plane, $\mathbb{C}^*$ the punctured plane $\mathbb{C}\backslash \{0\}$, and $\hat{\mathbb{C}} = \mathbb{C}\cup\{\infty\}$ will be the one-point compactification of $\mathbb{C}$. Moreover, $\Omega$ will stand for a domain contained in $\mathbb{C}$. In contrast, $\mathbb{D}$ will represent the open unit disk of $\mathbb{C}$, and $H(\Omega)$ will denote the space of holomorphic functions on $\Omega$ endowed with the topology of uniform convergence on compact subsets.

If $X$ is a separable Fr\'{e}chet space, we denote by $\mathcal{B}(X)$ the algebra of operators, i.e., linear and continuous self-maps, and by $\mathcal{U}(X)$ the set of non-empty open subsets of $X$.

The most distinguished notions in a linear dynamical system setting $(X, T)$, where $T\in \mathcal{B}(X)$, are the cyclicity and the hypercyclicity; this is because they are connected with the invariant subspace problem and the invariant subset problem, respectively.

A sequence $(T_n)_{n\in \mathbb{N}}$ of operators on $X$ is said to be \emph{universal} if there exists a vector $f\in X$ such that
$$ \mathcal{O}((T_n)_{n\in \mathbb{N}},f):=\{T_nf:\ n\in \mathbb{N}\}$$
is dense in $X$.

An operator $T$ acting on $X$ is said to be \emph{hypercyclic} if the sequence of iterations $(T^n)_{n\in \mathbb{N}}$ of $T$ is universal, in other words, there is $f\in X$ having a dense orbit; that is, there exists a vector $f\in X$ such that
$$ \mathcal{O}(T,f):=\{T^nf:\ n\in \mathbb{N}\}$$
is dense in $X$. This is equivalent to the fact that for each $V\in\mathcal{U}(X)$ the set
$$ N_T(f,V)=N(f,V):=\{n\geq 0 \mbox{ : }T^nf\in V\} $$
is non-empty, or equivalently an infinite set.
The vector $f$ itself is called a \emph{hypercyclic vector} for $T$.
We denote by $HC(T)$ the set of all hypercyclic vectors for $T$.

The operator $T$ is said to be \emph{cyclic} if there exists a vector $f\in X$ such that the subspace generated by
the orbit;
$$\textrm{span}(\mathcal{O}(T,f))=\{p(T)f:\ p\ \textrm{a polynomial}\},$$
is dense in $X$.
In this case, the vector $f$ is called a \emph{cyclic vector} for $T$.
The set of cyclic vectors is denoted by $C(T).$

A notion lying between cyclicity and hypercyclicity is that of supercyclicity.
An operator $T$ is said to be \emph{supercyclic} if there is a vector $f\in X$ whose projective orbit;
$$\mathbb{C}\cdot\mathcal{O}(f,T):=\{\lambda T^n f:\ n\in \mathbb{N}\ \textrm{and}\ \lambda\in \mathbb{C} \},$$
is dense in $X$.
The vector $f$ is called a \emph{supercyclic vector} for $T$. The set of all supercyclic vectors for $T$ is denoted by $SC(T)$.

Utilizing the Birkhoff's Transitivity Theorem, see \cite{birkhoff}, one can prove that $T$ is hypercyclic if and only if it is \emph{topologically transitive}; that is, provided for each pair $(U,V)\in \mathcal{U}(X)$, the set
$$N_T(U,V)=N(U,V):=\{ n\in \mathbb{N} \mbox{ : } T^n(U)\cap V\neq \emptyset \}$$
is an infinite set.

For more information about hypercyclic, cyclic, and supercyclic operators and their proprieties, see \cite{AB1,AB2,AB3,AB4,bayart,universal,Peris}.

Recently in \cite{strong}, a generalization of topological transitivity, called $\mathcal{F}$-transitivity, where $\mathcal{F}$ is a family, was investigated. Recall that a non-empty
collection of subsets of $\mathbb{N}$ is a Furstenberg family or just a family if it satisfies the following property:
\begin{equation*}
 \emptyset\notin \mathcal{F}\ \textrm{and if}\ A \in \mathcal{F}\ \textrm{and}\ A \subset B,\ \textrm{then}\ B \in \mathcal{F}.
\end{equation*}
We say that an operator $T$ acting on $X$ is a $\mathcal{F}$-\emph{transitive} (or $\mathcal{F}$-operator for
short), provided $\mathcal{N}_T\subset \mathcal{F}$, where
$$ \mathcal{N}_T:=\{A\subset \mathbb{N} \mbox{ : }N_T(U,V) \subset A \mbox{ for some } U\mbox{, } V\in \mathcal{U}(X)\}. $$
This means that $T$ is $\mathcal{F}$-transitive if and only if $N_T(U,V)\in \mathcal{F}$ for each
$U$, $V$ non-empty open subsets of $X$.

For some families, the $\mathcal{F}$-transitivity are well-known notions in the topological dynamic.
For instance, if $\mathcal{F}$ is the family of infinite sets, then the $\mathcal{F}$-operators are precisely the operators that are topologically transitive.
Suppose that $\mathcal{F}$ is the family of cofinite sets, then the $\mathcal{F}$-operators are the operators that are mixing, see \cite{mixing2,mixing3}.
If $\mathcal{F}$ is the family of thick sets (those that contain arbitrarily long intervals), then the $\mathcal{F}$-operators are the operators that are weak mixing, see \cite{wmixing1,wmixing2,wmixing3}. When $\mathcal{F}$ is the family of synthetic sets (those that have
bounded gaps) then the $\mathcal{F}$-operators are those that are ergodic.

Bès, Menet, Peris, and Puig in \cite{strong} characterized the $\mathcal{F}$-transitivity of weighted shift operators, while Amouch and Karim studied in \cite{AK} the $\mathcal{F}$-transitivity of composition operators.

The notion of disjointness was introduced by Harry Furstenberg in $1967$ in his seminal
paper \cite{D4} for dynamical systems where the spaces are measurable spaces, and the applications are measure-preserving transformations, and also for homeomorphisms of compact spaces, generally to study fluid flow. After that, the notions of disjoint universality, disjoint hypercyclicity, and disjoint topological transitivity were introduced and studied independently for operators by Bès and Peris in \cite{D3}, and Bernal Gonz\'{a}lez in \cite{D1}.\\
Let $N\in\mathbb{N}$, and $X$, $Y_1$, $\dots$, $Y_N$ be topological vector spaces.
For $n\in\mathbb{N}$ and $j=1$, $2$,  $\dots$, $N$, let $T_{j,n}$ : $X\longrightarrow Y_j$ be continuous linear mapping.
We say that the sequences $(T_{1,n})_{n=1}^{\infty}$, $\dots$ $(T_{N,n})_{n=1}^{\infty}$ are Disjoint Universal if the sequence $[T_{1,n},\dots, T_{N,n}]_{n=1}^{\infty}$ : $X\longrightarrow Y_1\times \dots \times Y_N$ defined as
$$ [T_{1,n},\dots, T_{N,n}]x=(T_{1,n}x,\dots, T_{N,n}x), \ \ x\in X \ \mbox{ and } \ n\in\mathbb{N}$$
is universal.
If $(T_1,$ $\dots,$ $T_N)\in\mathcal{B}(X)^N$, then the $N$-tuple $(T_1,$ $\dots,$ $T_N)$ are called disjoint hypercyclic provided that the sequences $(T_1^n)_{n=1}^{\infty},$ $\dots,$ $(T_N^n)_{n=1}^{\infty}$ are
disjoint universal.
Obviously, $(T_1,$ $\dots,$ $T_N)$ are disjoint hypercyclic if and only if the operator $T_1\oplus\dots\oplus T_N$ admits a hypercyclic vector of the form $(x,\ldots, x)$, for some $x \in X$. We say that the sequences of operators $(T_{1,n})_{n=1}^\infty,\ldots,(T_{N,n})_{n=1}^\infty$ in $\mathcal{B}(X)$
are $d$-topologically transitive provided for every non-empty open subsets
$U_0,...,U_N$ of $X$ there exists $n \in \mathbb{N}$ such that
$$U_0\cap T_{1,n}^{-1}(U_1)\cap \ldots\cap T_{N,n}^{-1}(U_N)\neq\emptyset.$$
 Also, we say that $N \geq 2$ operators
$T_1,...,T_N$ in $\mathcal{B}(X)$ are d-topologically transitive provided $(T_1^n)_{n=1}^\infty, . . . ,
(T_N^n)_{n=1}^{\infty}$ are $d$-topologically transitive sequences.

In $2009$, Salas in \cite{D2} introduced a notion that is stronger than the disjoint hypercyclicity with the name of dual disjoint hypercyclic operators.
Let $X$ be a Banach space and $(T_1,$ $\dots,$ $T_N)\in\mathcal{B}(X)^N$, then $(T_1,$ $\dots,$ $T_N)$ are said to be dual disjoint hypercyclic operators if both $(T_1,$ $\dots,$ $T_N)$ and $(T_1^*,$ $\dots,$ $T_N^*)$ are disjoint hypercyclic.

For inserting examples of disjoint hypercyclic $N$-tuples, see \cite{D1,D3,dishypweipseshionbanseqspa}.

One can ask, under which conditions a Banach space $X$ can support disjoint hypercyclic and dual disjoint hypercyclic operators.
Bés, Martin and Peris \cite{D5} and Salas \cite{D2} independently proved that if $N\in\mathbb{N}$ and $X$ is a infinite dimensional Banach space, then there exist $T_1$, $\dots$, $T_N\in\mathcal{B}(X)^N$ such that the $N$-tuple $(T_1,\dots, T_N)$ is disjoint hypercyclic.
Moreover, the construction in \cite{D5} still valid also for separable infinite-dimensional
Fréchet spaces, while the construction in \cite{D2} provides a dual disjoint hypercyclic $N$-tuple on an infinite-dimensional Banach space with separable dual.
Later, Shkarin gave a short proof of those results in \cite{D6} by demonstrating the presence of disjoint hypercyclic operators (disjoint dual hypercyclic tuples of operators) of any length on any separable infinite-dimensional Fréchet space (infinite-dimensional Banach space with separable dual).

Recently it was introduced and studied the notion of disjoint $\mathcal{F}$-hypercyclicity and disjoint $\mathcal{F}$-topologically transitive of binary relations over topological spaces \cite{fhypdisfhyp,fhypextdisfhyp}.

A finitely many operators $T_1,\dots,T_N$ are said to be disjoint $\mathcal{F}$-transitive if for every $U_0,\dots,U_N\in \mathcal{U}(X)$, the set
$$N(U_0,...,U_N):=\{n\in \mathbb{N}:\ U_0\cap T_1^{-n} U_1\cap\dots\cap T_N^{-n} U_N\neq\emptyset\}$$ belongs to $\mathcal{F}$.

In this paper, we study the disjoint $\mathcal{F}$-transitivity of composition operators on $H(\Omega)$.
Recall that if $\phi$ is a holomorphic self-map of $\Omega$ then
the composition operator with symbol $\phi$ is defined as
$$C_\phi(f) = f \circ \phi\mbox{, } \ \ f \in H(\Omega).$$ Obviously we have that $$C_\phi^n=C_{\phi_n},\ \textrm{for every}\ n\in \mathbb{N},$$ where $\phi_n$ the $n$-th iterate of $\phi$,
$$\phi_n:=\phi\circ\dots\circ\phi,\ n-\textrm{times}.$$ From now on, if $\phi_1,\dots,\phi_N$ are $N$ self-maps of $\Omega$ we use the notation $\phi_{i,n}$ to denote the $n$-th iterate of $\phi_i$, $i=1,\dots,N$,
$$\phi_{i,n}:=\phi_i\circ\dots\circ\phi_i,\ n-\textrm{times}.$$

The first one who dealt with the dynamics of composition operator was
Birkhoff in \cite{birkhoff}, since then the dynamics of composition operators was studied by many authors, as examples
Bernal and Montes \cite{C1} studied the universality of a sequence of composition operators induced by automorphisms on the disk, Grosse-Erdmann and Mortini \cite{C2} (see also \cite{C3}) who characterized the hypercyclic composition generated by non-automorphisms self maps, Bayart, Darji, and Peris \cite{C4}, Bès \cite{C5,C6}, and Kamali and Yousefi \cite{dishypwei} and others more.

The paper is organized as follows: In Section $2$, we characterize the disjoint transitivity of the composition operator acting on $H(\Omega)$, where $\Omega$ is a finitely connected not simply connected, or infinitely connected domain. In this sense, we complete the study made by Bès and Martin \cite{comdishypequdissup}, who characterized the disjoint transitivity of the composition operator acting on $H(\Omega)$, where $\Omega$ is simply connected.\\
In Section $3$, we study the disjoint $\mathcal{F}$-transitivity on $H(\Omega)$, when $\Omega$ is a simply connected domain.
We prove that if $\phi_1,\dots,\phi_N$ be $N\geq2$ holomorphic self-maps of a simply connected domain $\Omega$, then  the operators $C_{\phi_1},\dots,C_{\phi_N}$ are disjoint $\mathcal{F}$-transitive
if and only if the maps $\phi_1,\dots,\phi_N$ are injective and
for every compact subset $K$ of $\Omega$ the set $$\{n\in \mathbb{N}:\ K,\phi_{1,n}(K),\dots,\phi_{N,n}(K)\textrm{ are pairwise disjoint}\}$$
belongs to $\mathcal{F}$.\\
In Section $4$, we turn into the case $\Omega$ is finitely connected but not simply connected or infinitely connected.
In particular, if $\Omega$ is finitely connected but not simply connected then there is no sequence $\phi_1,\dots,\phi_{N}$ of self-map of $\Omega$ such that $C_{\phi_1},\dots,C_{\phi_N}$ are disjoint $\mathcal{F}$-transitive. In case $\Omega$ is infinitely connected we show that if $\phi_1,\dots,\phi_{N}$ are $N$ holomorphic self-map of $\Omega$, then $C_{\phi_1},\dots,C_{\phi_N}$ are disjoint $\mathcal{F}$-transitive
if and only if $\phi_1,\dots\phi_N$ are injective and for every $\Omega$-convex compact subset $K$ of $\Omega$, the set
$$\{n \in \mathbb{N}\mbox{ : } \phi_{i,n}(K),\ i=1,\dots,N \mbox{ are } \Omega\mbox{-convex and } K,\phi_{1,n}(K),\dots,\phi_{N,n}(K) \mbox{ are pairwise disjoint}\}$$
belongs to $\mathcal{F}$.

\section{Disjoint transitivity on $H(\Omega)$}
In this section, we study the disjoint transitivity of $C_\phi$ on $H(\Omega)$, where $\Omega$ is any domain of $\mathbb{C}$ which is not simply connected. There are two cases:
if $\Omega$ is finitely connected non-simply connected, and the case in which $\Omega$ is infinitely connected. Since the hypercyclicity of composition operator $C_\phi$ implies the injectivity of $\phi$, and the disjoint transitivity is stronger than hypercyclicity, then from now on, we shall assume that the maps $\phi_1,\dots,\phi_N$ are injective.
\begin{defn}
A domain $\Omega$ of $\mathbb{C}$ is called simply connected if $\mathbb{\hat{C}}\backslash\Omega$ is connected. The domain $\Omega$ is called finitely
connected if $\hat{\mathbb{C}}\backslash \Omega$ contains at most finitely many connected components; otherwise, it is infinitely connected.
\end{defn}
For simplicity, if $M$ is a set of $\mathbb{C}$, any bounded component of $\mathbb{\hat{C}}\backslash M$ will be referred to as a hole of $M$. In this sense, finitely connected domains have a limited number of holes, a simply connected domain has none.
\begin{defn}
Let $\Omega$ be a domain of $\mathbb{C}$ and $M$ a compact subset of $\Omega$. The compact $M$ is said to be $\Omega$-convex if any hole of $M$ includes a point of $\mathbb{C}\backslash\Omega$.
\end{defn}
\begin{thm}
If $\Omega\subset \mathbb{C}$ is a finitely connected domain that is not simply connected. Then there is no family $C_{\phi_{1}},\dots ,C_{\phi_{N}}$ of disjoint transitive composition operators, where
$\phi_1,\dots,\phi_N$ are holomorphic injective self-maps.
\end{thm}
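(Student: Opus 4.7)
The plan is to reduce the question to the non-hypercyclicity of a single composition operator on $H(\Omega)$, and then obstruct that hypercyclicity via a Jordan-curve counting argument that exploits the finite number of holes of $\Omega$.

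I would first note that disjoint transitivity of $C_{\phi_1},\dots,C_{\phi_N}$ forces each $C_{\phi_i}$ to be topologically transitive on its own: taking $U_j = H(\Omega)$ for every $j \notin \{0,i\}$ in the definition of disjoint transitivity recovers the ordinary transitivity of $C_{\phi_i}$. So each $C_{\phi_i}$ is hypercyclic, and it suffices to show that no composition operator $C_\phi$ with injective holomorphic symbol is hypercyclic on $H(\Omega)$ when $\Omega$ is finitely connected and not simply connected. Invoking the Grosse-Erdmann/Mortini characterization of hypercyclic composition operators on plane domains (see \cite{C2,C3}), this amounts to exhibiting an $\Omega$-convex compact $K \subset \Omega$ for which, whenever $\phi_n(K) \cap K = \emptyset$, the union $K \cup \phi_n(K)$ fails to be $\Omega$-convex.

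The test set I would use is $K := \gamma_1 \cup \dots \cup \gamma_k$, where $\gamma_j$ is a smooth Jordan curve in $\Omega$ enclosing the $j$-th bounded hole $H_j$ of $\hat{\mathbb{C}} \setminus \Omega$ (and no other $H_{j'}$); here $k \geq 1$ because $\Omega$ is not simply connected, and the $\gamma_j$ can be taken pairwise disjoint by shrinking each close enough to its hole. Then $K$ is $\Omega$-convex: its $k$ bounded complementary regions are the Jordan interiors of the $\gamma_j$, each containing a hole of $\Omega$. Now suppose $n$ is such that $\phi_n(K) \cap K = \emptyset$. Since $\phi_n$ is injective, $\phi_n(K)$ is a disjoint union of $k$ Jordan curves, so $K \cup \phi_n(K)$ is a collection of $2k$ pairwise disjoint Jordan curves in $\mathbb{C}$. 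An easy induction on the Jordan curve theorem shows that $M$ pairwise disjoint Jordan curves partition $\hat{\mathbb{C}}$ into $M+1$ regions, of which exactly $M$ are bounded; applied with $M=2k$, the complement $\hat{\mathbb{C}} \setminus (K \cup \phi_n(K))$ has exactly $2k$ bounded regions. For $\Omega$-convexity, each such region must meet $\mathbb{C} \setminus \Omega$. The unbounded component $H_\infty$ of $\hat{\mathbb{C}} \setminus \Omega$ is connected, contains $\infty$, and is disjoint from the compact set $K \cup \phi_n(K) \subset \Omega$, so it lies entirely in the unbounded region of $\hat{\mathbb{C}} \setminus (K \cup \phi_n(K))$; consequently only the $k$ bounded holes $H_1,\dots,H_k$ can populate bounded regions, and each of them, being connected, lies in exactly one region. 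This yields at most $k$ populated bounded regions out of $2k$, a contradiction for every $k \geq 1$.

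I expect the most delicate step to be the invocation of the union-$\Omega$-convexity clause of the hypercyclicity characterization, which is precisely what converts the dynamical hypothesis into the topological counting; the counting itself is elementary. If one prefers a self-contained route, the same counting produces, for any candidate $n$ realizing pairwise disjointness, at least $k$ ``spurious'' bounded regions $V \subset \Omega$ with $\partial V \subset K \cup \phi_n(K)$, and Cauchy's theorem then forces $\int_{\partial V} f\,dz = 0$ for every $f \in H(\Omega)$. A direct obstruction to disjoint transitivity can then be assembled from the period functionals $\Lambda_j(f) := \int_{\gamma_j} f\,dz$ together with target functions of the form $1/(z - a_j)$ with $a_j \in H_j$, producing open sets $U_0, U_1, \dots, U_N$ whose required disjoint-transitivity intersection would impose incompatible period values across the two sides of a spurious region.
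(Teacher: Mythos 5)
Your opening reduction is fine and is essentially the paper's: disjoint transitivity forces each $C_{\phi_i}$ to be topologically transitive, hence hypercyclic, and the paper then concludes in one line by citing \cite[Theorem 3.15]{C2} (no hypercyclic composition operator exists on a finitely connected, non--simply connected domain). What you do differently is to try to reprove that cited theorem by a hole-counting argument, and this is where there is a genuine gap. The necessary condition that the Grosse-Erdmann--Mortini machinery actually establishes (and the form reproduced in this paper, see Theorem \ref{thm11} and Theorem \ref{thm22}) is: for every $\Omega$-convex compact $K$ there is $n$ with $\phi_n(K)\cap K=\emptyset$ and $\phi_n(K)$ \emph{itself} $\Omega$-convex. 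The stronger statement you invoke, that $K\cup\phi_n(K)$ can be made $\Omega$-convex for \emph{every} $\Omega$-convex compact $K$, is not available off the shelf: in the literature (and in Lemma \ref{lem12} and Lemma \ref{lem21} of this paper) union-$\Omega$-convexity is only deduced from the piecewise condition for \emph{connected} $K$ having \emph{at least two holes}. Your test set $K=\gamma_1\cup\dots\cup\gamma_k$ is disconnected for $k\geq 2$ and has a single hole component-wise; worse, when $\Omega$ has exactly one hole ($k=1$, the annulus-type case) \emph{no} $\Omega$-convex compact subset has two holes, so the known bridge from ``$\phi_n(K)$ $\Omega$-convex'' to ``$K\cup\phi_n(K)$ $\Omega$-convex'' simply does not exist there.

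This matters because with the necessary condition that is actually proved, your counting does not close the case $k=1$: in an annulus $\Omega$, two disjoint Jordan curves each winding once around the single hole (e.g.\ concentric circles, which do arise as $K$ and $\phi(K)$ for the injective self-map $z\mapsto R/z$) satisfy ``$\phi_n(K)$ is $\Omega$-convex and disjoint from $K$'' while their union is not $\Omega$-convex; so no contradiction follows from counting complementary regions alone. (For $k\geq 2$ one can in fact extract a purely topological contradiction from the weaker condition by analyzing how the connected sets $K$ and $\phi_n(K)$ nest, but the doubly connected case genuinely needs extra input -- e.g.\ that an essential injective holomorphic self-map of an annulus is an automorphism, or Grosse-Erdmann--Mortini's own argument -- and that is precisely the content of the theorem you are trying to reprove.) Your closing ``self-contained'' paragraph with period functionals is only a sketch and does not fill this hole. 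So either cite \cite[Theorem 3.15]{C2} directly, as the paper does, or supply a proof of the union-form necessity for your particular $K$ (equivalently, handle the one-hole case by a separate analytic argument).
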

\begin{proof}
Assume that $C_{\phi_{1}},\dots ,C_{\phi_{N}}$ are disjoint transitive.
Then the operator $C_{\phi_{1}}\bigoplus\dots\bigoplus C_{\phi_{N}}$ is hypercyclic.
This implies that each operator $C_{\phi_{i}}$, $i=1,\dots, N$, is hypercyclic in $H(\Omega)$,
which is a contradiction since if
$\Omega$ is finitely connected not simply connected, then there is no hypercyclic composition operator on $H(\Omega)$, see \cite[Theorem 3.15]{C2}.
\end{proof}
\begin{thm}\label{thm11}
Let $\Omega\subset \mathbb{C}$ be an infinitely connected domain, and let $\phi_{1},\dots,\phi_{N}$ be $N \geq 2$ holomorphic injective self-maps of $\Omega$. If the operators $C_{\phi_{1}},\dots ,C_{\phi_{N}}$ are d-topologically transitive, then for every $\Omega$-convex
compact subset $K$ of $\Omega$, there is some $n \in \mathbb{N}$ such that the sets
$$\phi_{1,n}(K),\phi_{2,n}(K),\dots,\phi_{N,n}(K)$$
are $\Omega$-convex and $$K,\phi_{1,n}(K),\dots,\phi_{N,n}(K)$$ are pairwise disjoint.
\end{thm}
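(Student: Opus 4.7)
The plan is to apply $d$-topological transitivity to a carefully engineered family of nonempty open sets whose simultaneous occupancy will encode both pairwise disjointness of the images and $\Omega$-convexity of each $\phi_{i,n}(K)$. Fix $\epsilon\in(0,1/3)$ and distinct constants $c_0=0,c_1,\dots,c_N\in\mathbb{C}$ with $|c_i-c_j|>1$ for $i\neq j$, and consider the basic open neighborhoods
\[
U_i^{(0)}:=\{f\in H(\Omega):\sup_K|f-c_i|<\epsilon\},\qquad i=0,1,\dots,N.
\]
A first application of $d$-topological transitivity produces $n\in\mathbb{N}$ and $f\in H(\Omega)$ with $|f|<\epsilon$ on $K$ and $|f-c_i|<\epsilon$ on $\phi_{i,n}(K)$. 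A value-separation argument then forces $K,\phi_{1,n}(K),\dots,\phi_{N,n}(K)$ to be pairwise disjoint, since any point shared by two of these sets would yield $|c_i-c_j|<2\epsilon<1$.

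The real challenge is producing $n$ that additionally makes every $\phi_{i,n}(K)$ an $\Omega$-convex set. The idea is to enrich $U_0^{(0)}$ by growth constraints at points of $\Omega\setminus K$, so that the maximum modulus principle turns any hypothetical hole of $\phi_{i,n}(K)$ contained in $\Omega$ into a contradiction. Since $K$ is $\Omega$-convex, Runge's theorem guarantees that for any finite set $W\subset\Omega\setminus K$ and any $M>0$ one can find $f\in H(\Omega)$ with $\sup_K|f|$ arbitrarily small and $|f(w)|>M$ for every $w\in W$; hence
\[
U_0^{(W,M)}:=\bigl\{f\in H(\Omega):\sup_K|f|<\epsilon \text{ and } |f(w)|>M \text{ for all } w\in W\bigr\}
\]
is a nonempty open subset of $H(\Omega)$.

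Now fix an increasing sequence of finite sets $W_1\subset W_2\subset\cdots\subset\Omega\setminus K$ whose union is dense in $\Omega\setminus K$, together with growth levels $M_m\to\infty$. For each $m$, apply $d$-topological transitivity to the tuple $(U_0^{(W_m,M_m)},U_1^{(0)},\dots,U_N^{(0)})$ to extract $n_m\in\mathbb{N}$ and $f^{(m)}\in H(\Omega)$; the pairwise disjointness discussed above then holds for $n_m$. Assume, toward a contradiction, that some $\phi_{i,n_m}(K)$ admits a hole $H\subset\Omega$ (a bounded component of $\hat{\mathbb{C}}\setminus \phi_{i,n_m}(K)$ contained in $\Omega$). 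The maximum modulus principle applied to $f^{(m)}-c_i$ on $H$ yields $|f^{(m)}-c_i|<\epsilon$ throughout $\overline H$. If $H\cap K=\emptyset$, then $H$ is a nonempty open subset of $\Omega\setminus K$ and, for $m$ large enough, must meet $W_m$; any such point $w$ then satisfies both $|f^{(m)}(w)|>M_m$ and $|f^{(m)}(w)|<|c_i|+\epsilon$, impossible once $M_m$ is large. If instead $H\cap K\neq\emptyset$, then disjointness of $K$ and $\phi_{i,n_m}(K)$ forces any connected component of $K$ meeting $H$ to lie entirely in $H$, whence $|c_i|<2\epsilon<1$, again a contradiction.

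The main obstacle is justifying that for \emph{some} $m$ the extracted $n_m$ really does have every hypothetical hole of $\phi_{i,n_m}(K)$ in $\Omega\setminus K$ meeting $W_m$, since $n_m$ is chosen by the transitivity hypothesis after $W_m$ and cannot be calibrated against it in advance. Resolving this requires exploiting the infinite connectedness of $\Omega$---that $\mathbb{C}\setminus\Omega$ has infinitely many components distributed around $\partial\Omega$---to prevent a hole from persistently escaping the dense sequence $\bigcup_m W_m$, combined with the injectivity of each $\phi_i$ (which rules out degeneration of $\phi_{i,n_m}(K)$) and a diagonal argument along $m$. Getting this calibration right is the technical heart of the proof and precisely what makes the infinitely connected case heavier than the simply connected case treated in Section 3.
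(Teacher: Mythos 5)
Your first half (pairwise disjointness via targets near well-separated constants) is fine and is essentially the same kind of value-separation argument the paper uses, but your second half has a genuine gap, and it is exactly the one you flag yourself: the step ``for $m$ large enough, $H$ must meet $W_m$'' is not available. The hole $H$ is a bounded component of the complement of $\phi_{i,n_m}(K)$, and $n_m$ is handed to you by $d$-transitivity \emph{after} the finite set $W_m$ and the level $M_m$ have been fixed; for each fixed $m$ the hole can be a small region avoiding the finite set $W_m$, and the density of $\bigcup_m W_m$ gives nothing because $H$ varies with $m$. There is no way within $d$-topological transitivity to calibrate $n$ against a prescribed finite (or even compact) subset of $\Omega\setminus K$, since the open condition $|f(w)|>M$ only sees points you chose in advance while the offending hole can sit anywhere. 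Appealing to infinite connectedness, injectivity and ``a diagonal argument'' does not repair this: the contradiction in your scheme requires knowing \emph{where} the hole is, and that is precisely the information you cannot have.

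The paper avoids this by making the contradiction location-free. It enlarges $K$ to a connected $\Omega$-convex compact $L$ with finitely many holes $O_1,\dots,O_p$ bounded by Jordan curves $\gamma_1,\dots,\gamma_p$, picks $a_j\in\mathbb{C}\setminus\Omega$ in each hole and $b\in\Omega\setminus K$, and uses the zero-free (on $\Omega\setminus\{b\}$) targets $g_m(z)=m(z-b)^{p+1}/\prod_j(z-a_j)$, whose logarithmic derivative has period $\frac{1}{2\pi i}\int_{\gamma_q}g_m'/g_m=1$ around every $\gamma_q$. Taking $f$ with $(f,\dots,f)$ hypercyclic for $C_{\phi_1}\oplus\dots\oplus C_{\phi_N}$, one gets $f\circ\phi_{i,n}\to g_m$ together with convergence of the logarithmic derivatives, hence $\frac{1}{2\pi i}\int_{\phi_{i,n}(\gamma_q)}f'/f=1$ for all $q,i$. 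If some hole $O$ of $\phi_{i,n}(L)$ missed $\mathbb{C}\setminus\Omega$, its boundary would be some $\phi_{i,n}(\gamma_l)$, negatively oriented around $O$ by preservation of orientation, so $-\frac{1}{2\pi i}\int_{\phi_{i,n}(\gamma_l)}f'/f$ would be the number of zeros of $f$ in $O$ and simultaneously equal $-1$ --- a contradiction obtained without ever locating $O$. This argument-principle mechanism (zero counting for a nearly zero-free $f$, controlled through convergence of $f'/f$ rather than growth at pre-selected points) is the missing idea; your maximum-modulus bound $|f-c_i|\le\epsilon$ on $H$ is by itself not contradictory, and no choice of $W_m,M_m$ can make it so uniformly in the unknown $n_m$.
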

\begin{proof}
Let $K$ be a $\Omega$-convex compact subset of $\Omega$. As in the proof of \cite[Theorem 3.12]{C2} we can construct
a compact $L$ that includes $K$ such that:
\begin{enumerate}
\item $L$ is $\Omega$-convex with a finite number of holes $O_1,\ldots,O_p$ whose
boundaries represent Jordan curves $\gamma_1,\ldots,\gamma_p$;
\item $L$ is connected and its outer boundary $\gamma_0$ embraces all the $p$ holes.
\end{enumerate}
If $b\in \Omega\backslash K$, then in each hole $O_j$ of $L$, $j = 1, . . . , p$, we can choose a point $a_j \in \mathbb{C}\backslash\Omega$.


For any $m \in \mathbb{N}$ we define
$$g_m(z)=m\frac{(z-b)^{p+1}}{\prod_{j=1}^p(z-a_j)}.$$
We then have:
$$\frac{1}{2i\pi}\int_{\gamma_j}\frac{g'_m(z)}{g_m(z)}dz= \frac{1}{2i\pi}\int_{\gamma_0}\frac{g'_m(z)}{g_m(z)}dz=1,$$
for $j = 1, . . . , p$ and $m \in \mathbb{N}$.
Let $f\in H(\Omega)$, such that $(f,f,\dots,f)$ be a hypercyclic vector for $C_{\phi_{1}}\oplus\dots\oplus C_{\phi_{N}}$.
Then, if $m\in \mathbb{N}$, there exist sequences $(n_k^{(m)})$ such that
$$f\circ \phi_{1,n_k^{(m)}}\rightarrow g_m, \ \ f\circ \phi_{2,n_k^{(m)}}\rightarrow g_m,\dots,f\circ \phi_{N,n_k^{(m)}}\rightarrow g_m,$$
 and
 $$(f\circ \phi_{1,n_k^{(m)}})'\rightarrow g'_m,\ \ (f\circ \phi_{2,n_k^{(m)}})'\rightarrow g'_m, \dots,(f\circ \phi_{N,n_k^{(m)}})'\rightarrow g'_m.$$
 Thus,
$\frac{(f\circ \phi_{i,n_k^{(m)}})'}{f\circ \phi_{i,n_k^{(m)}}}$ converge to $\frac{g'_m}{g_m}$ locally uniform on $\Omega\backslash\{b\}$, for every $i=1,\dots,N$. Since $\min_{z\in K}|g_m(z)|\rightarrow \infty$ as $m\rightarrow \infty$, taking into account that $b\notin K$, we deduce that there is a sequence $(n_m)$ such that, for every $i=1,\dots,N$
$$f(\phi_{i,n_m}(z))-g_m(z)\rightarrow 0\ \textrm{in}\ H(\Omega),$$
\begin{equation}\label{equ21}
\frac{(f\circ \phi_{i,n_m})'(z)}{(f\circ \phi_{i,n_m})(z)}-\frac{g'_m(z)}{g_m(z)}\rightarrow 0\ \textrm{in}\ H(\Omega\backslash\{b\}),
\end{equation}
and
$$\min_{z\in K}|f(\phi_{i,n_m}(z))|>\max_{z\in K}|f(z)|.$$
Thus, for every $i=1,\dots,N$, $$\phi_{i,n_m}(K)\cap K=\emptyset.$$
Suppose that, $\phi_{r,n_m}(K)\cap \phi_{s,n_m}(K)\neq\emptyset$ for some $r\neq s$. Choose $z_r,z_s\in K$ with $\phi_{r,n_m}(z_r)=\phi_{s,n_m}(z_s)$. Let $m\geq|r-s|$ such that there is $n_m$ with $\sup_{K}|f\circ\phi_{i,n_m}(z)-g_m(z)|<\frac{1}{2 m}$. By picking a particular $b$ we suppose that
$$\left\lvert\prod_{i=1}^{p}\frac{(z_r-b)^{p+1}}{(z_r-a_i)}-
\prod_{i=1}^{p}\frac{(z_s-b)^{p+1}}{(z_s-a_i)}\right\rvert\geq1.$$
 Then
\begin{align*}
1\leq  |r-s|&\leq m \left\rvert\prod_{i=1}^{p}\frac{(z_r-b)^{p+1}}{(z_r-a_i)}-
\prod_{i=1}^{p}\frac{(z_s-b)^{p+1}}{(z_s-a_i)}\right\rvert\\
                             &=|g_m(z_r)-g_m(z_s)|\\
                                                          &\leq |g_m(z_r)-f(\phi_{r,n_m}(z_r))|+|f(\phi_{s,n_m}(z_s))-g_m(z_s)|\\
                              &< \frac{1}{m},
\end{align*}
which is contradicting.

From Equation \eqref{equ21}, there is some $n\in (n_m)_{m\in \mathbb{N}}$, such that $\phi_{1,n},\dots,\phi_{N,n}$ are injective on a neighborhood of $L$ and such that, for $q=0,\dots,p,$ and $i=1,\dots,N,$ we have that:
\begin{equation}\label{equ11}
\frac{1}{2\pi i}\int_{\phi_{i,n}(\gamma_q)}\frac{f'(z)}{f(z)}dz=\frac{1}{2\pi i}\int_{\gamma_q}\frac{(f\circ \phi_{i,n})'(z)}{(f\circ \phi_{i,n})(z)}dz=1.
\end{equation}

Now, for $i\in  \llbracket 1,N \rrbracket$, to show that $\phi_{i,n}(K)$ is $\Omega$-convex we will prove that $\phi_{i,n}(L)$ is $\Omega$-convex. This together with \cite[Lemma 3.11]{C2} implies that $\phi_{i,n}(K)$ is $\Omega$-convex.
Since $\phi_{i,n}$ is injective on a neighborhood of $L$, $\phi_{i,n}(L)$ is a compact set with exactly
$p$ holes (see \cite[p. 276]{remmert}). We assume that one of these holes, call it $O$, does not contain a point from
$\mathbb{C} \backslash \Omega$. Since injective holomorphic functions maps boundaries to boundaries, there is some
$l \in \{0, 1, . . . , p\}$ such that the Jordan curve $\phi_{i,n}(\gamma_l)$ is the boundary of $O$. Moreover, since
$O$ contains no point from $\mathbb{C}\backslash\Omega$, we have that
$$\textrm{ind}_{\phi_{i,n}(\gamma_l)}(\zeta) = 0\ \textrm{for}\ \zeta\notin \Omega.$$
Now, the compact set $L$ is to the left of each curve $\gamma_j$, $j = 0, 1, . . . , p$. Since injective
holomorphic mappings preserve orientation, $\phi_{i,n}(L)$ must also be to the left of the image
curve $\phi_{i,n}(\gamma_l)$. This implies that $\phi_{i,n}(\gamma_l)$ is oriented negatively. Since $f$ is holomorphic in a
neighborhood of $O$, the integral
$$-\frac{1}{2\pi i}\int_{\phi_{i,n}(\gamma_l)}\frac{f'(z)}{f(z)}dz$$
equals the number of zeros of $f$ in $O$; but, by Equation \eqref{equ11}, that integral has the value $-1$, a
contradiction. Hence, we can conclude that $\phi_{i,n}(L)$ is $\Omega$-convex.
\end{proof}
\begin{lemma}\label{lem12}
Let $\Omega\subset \mathbb{C}$ be an infinitely connected domain, and $\phi_1,\dots,\phi_N$ be $N\geq2$ injective holomorphic self-maps of $\Omega$. Suppose that, for every $\Omega$-convex compact subset $K$ of $\Omega$, there is some
$n\in \mathbb{N}$ such that for $i=1,\dots,N$, $\phi_{i,n}(K)$ are $\Omega$-convex and $K,\phi_{1,n}(K),\dots,\phi_{N,n}(K)$ are pairwise disjoint.
Then, for every connected $\Omega$-convex compact subset $K$ of $\Omega$ that has at least two holes, there is some $n \in \mathbb{N}$ such that
\begin{enumerate}
\item For every $i=1,\dots,N$, $\phi_{i,n}(K)$ is $\Omega$-convex and $K\cup \phi_{1,n}(K)\cup\dots\cup \phi_{N,n}(K)$ is $\Omega$-convex;
\item the sets $K,\phi_{1,n}(K),\dots,\phi_{N,n}(K)$ are pairwise disjoint.
\end{enumerate}
\end{lemma}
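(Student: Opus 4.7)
The plan is to deduce the conclusion by applying the hypothesis not to $K$ itself but to a carefully chosen connected $\Omega$-convex enlargement $L\supseteq K$. Any integer $n$ supplied by the hypothesis for $L$ will then also serve for $K$: the pairwise disjointness in (2) follows from $\phi_{i,n}(K)\subseteq\phi_{i,n}(L)$, and the $\Omega$-convexity of each individual $\phi_{i,n}(K)$ follows from \cite[Lemma 3.11]{C2} applied to the injective holomorphic map $\phi_{i,n}$ on a neighborhood of $L$. The work is therefore concentrated in constructing $L$ so that the $\Omega$-convexity of the union in (1) is automatic for the $n$ produced.

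To build $L$, let $O_1,\dots,O_p$ ($p\geq 2$) be the holes of $K$ and pick $a_j\in O_j\cap(\mathbb{C}\setminus\Omega)$. Using the infinite connectedness of $\Omega$, enclose $K$ by a thick ``frame'' $F\subset\Omega$ sitting at a large distance from $K$, and set $L:=K\cup F$. Arrange $F$ so that $L$ is connected, $\Omega$-convex, and its holes beyond $O_1,\dots,O_p$ are only small ``pockets'', each containing a point of $\mathbb{C}\setminus\Omega$. The size restriction on these extra holes is chosen precisely so that a connected compact with the same topological type as $L$ (in particular having at least two holes of its own that each contain a point of $\mathbb{C}\setminus\Omega$) cannot be embedded disjointly into any of them. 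Consequently, any disjoint injective holomorphic image $\phi_{i,n}(L)$ is forced into the unbounded component of $\mathbb{C}\setminus L$. By symmetry, the same kind of externality property is inherited between the pieces $\phi_{i,n}(L)$ themselves.

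Applying the hypothesis to $L$ yields $n\in\mathbb{N}$ such that $L,\phi_{1,n}(L),\dots,\phi_{N,n}(L)$ are pairwise disjoint $\Omega$-convex compacts. By the externality property, every hole of the union $L\cup\phi_{1,n}(L)\cup\dots\cup\phi_{N,n}(L)$ is just a hole of one of the individual pieces and therefore contains a point of $\mathbb{C}\setminus\Omega$, so the union is $\Omega$-convex. Restricting the same argument to the subcompacts $K\subseteq L$ and $\phi_{i,n}(K)\subseteq\phi_{i,n}(L)$ (each of which is individually $\Omega$-convex by the argument in the first paragraph, and which inherit the mutual externality from their enclosing pieces) delivers (1) and (2) for $K$.

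The main obstacle is the quantitative construction of the frame $F$ and the rigorous proof of the externality property, namely that the extra ``pockets'' of $L$ are topologically too constrained to receive a disjoint injective holomorphic image of $L$. This is a winding-number/boundary-preservation argument analogous to the one used in the proof of Theorem~\ref{thm11} (and ultimately going back to \cite[Theorem 3.12]{C2}); the delicate point is to produce a single choice of $L$ that enforces this property simultaneously against all $N$ maps $\phi_1,\dots,\phi_N$, which is where the assumption $p\geq 2$ (at least two holes in $K$) is used in an essential way.
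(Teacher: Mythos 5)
There is a genuine gap, and it sits exactly where you park it as ``the main obstacle'': the externality property is never proved, and the mechanisms you invoke cannot prove it. A ``size restriction'' on the extra pockets of $L$ is no obstruction at all for injective holomorphic maps, since a conformal copy of $L$ can be made arbitrarily small; nor is topological type an obstruction, because a pocket can easily contain a compact of any finite connectivity. The winding-number/boundary-orientation argument you point to (as in Theorem \ref{thm11}, going back to \cite[Theorem 3.12]{C2}) only yields that $\phi_{i,n}(L)$ is $\Omega$-convex; it says nothing about \emph{where} that image sits relative to the holes of $L$. The only plausible obstruction would be a count of components of $\mathbb{C}\setminus\Omega$ available inside each pocket (an $\Omega$-convex image with at least two holes needs two disjoint holes each meeting $\mathbb{C}\setminus\Omega$), but you neither formulate this nor show that an enlargement $L\supseteq K$ with such pockets exists for an arbitrary infinitely connected $\Omega$ and arbitrary $K$. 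Finally, the ``by symmetry'' step is unjustified: even granting that no image lies in a hole of $L$, you have no control over what the holes of $\phi_{i,n}(L)$ contain (they are not images of the holes of $L$, since $\phi_{i,n}$ is not defined there), so one image lying in a hole of another image is not excluded, and then the union can acquire a hole free of $\mathbb{C}\setminus\Omega$.

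The paper does not try to rule out these configurations; it absorbs them. It fixes an exhaustion $(K_l)$ of $\Omega$ by connected $\Omega$-convex compacts containing $K$, applies the hypothesis to each $K_l$ to get times $n_l$, sets $K'_l=\phi_{1,n_l}(K)\cup\dots\cup\phi_{N,n_l}(K)$, and then argues by cases on relative position: either some $K'_l$ and $K$ lie in the unbounded component of each other's complement (done); or infinitely many $K'_l$ lie in a fixed hole of $K$; or $K$ lies in holes of infinitely many $K'_l$. In the latter two cases it compares $K'_1$ with $K'_l$ for $l$ large enough that $\phi_{i,n_1}(K)\subset K_l$ (so that $K'_1\cap K'_l=\emptyset$ because $\phi_{i,n_l}(K_l)\cap K_l=\emptyset$), and uses the hypothesis that $K$ (hence each image) has at least two holes to conclude that one of $K\cup K'_1$, $K\cup K'_l$ is $\Omega$-convex. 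So the ``at least two holes'' assumption enters through this positional case analysis, not through any impossibility of embedding into holes. To salvage your route you would have to either prove the missing exclusion statement (which appears false in general) or switch to an argument of the paper's type.
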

\begin{proof}
Let $K$ be a connected $\Omega$-convex compact subset of $\Omega$ with at least two holes. We fix an exhaustive sequence $(K_l)$ of $\Omega$ of connected $\Omega$-convex compact sets, all containing
$K$. Then, by hypothesis, there is a subsequence $(n_l)\subset \mathbb{N}$ such
that, for all $l \in \mathbb{N}$ and $i=1,\dots,N$, $\phi_{i,n_l}|_{K_{l}}$ is injective, $\phi_{i,n_l}(K_l)$ is $\Omega$-convex and $K_l,\phi_{1,n_l}(K_l),\dots,\phi_{N,n_l}(K_l)$ are pairwise disjoint.

Hence $K,\phi_{1,n_l}(K),\dots,\phi_{N,n_l}(K)$ are pairwise disjoint and, by \cite[Lemma 3.11]{C2}, for $i=1,\dots,N$ $\phi_{i,n_l}(K)$ is $\Omega$-convex, too. Put $$K'_l:=\phi_{1,n_l}(K)\cup\dots \cup \phi_{N,n_l}(K).$$ We want to prove that, for some $l \in \mathbb{N}$, $K\cup K'_l=K\cup \phi_{1,n_l}(K)\cup\dots\cup \phi_{N,n_l}(K)$ is $\Omega$-convex.

Three cases to distinguish:
\begin{enumerate}
\item[(a)] First, if, for some $l \in \mathbb{N}$, $K'_l$ lies in the unbounded
component of $\mathbb{C}\backslash K$ and $K$ lies in the unbounded component of $\mathbb{C} \backslash K'_l$ then it follows
immediately that $K'_l \cup K$ is $\Omega$-convex.

\item[(b)] Secondly, infinitely many of the $K'_l$ could be found in $K$ holes. Because
$K$ has finitely many holes, infinitely many $K'_l$ must reside in some fixed hole $O$ of
$K$ according to \cite[Lemma 3.10]{C2}. We can presume that all of them do by passing to a subsequence. Then we choose some $l \in \mathbb{N}$ such that for every $i=1,\dots,N$, $\phi_{i,n_1}(K) \subset K_l$. Since $\phi_{i,n_l}(K_l) \cap K_l=\emptyset$, we have that for every $i,j=1,\dots,N$,
$$\phi_{i,n_1}(K)\cap\phi_{j,n_l}(K)\subset K_l\cap\phi_{j,n_l}(K_l)=\emptyset.$$ Thus, $K'_l$ and $K'_1$ are disjoint subsets of $O$. There are currently three possibilities: If both of these sets lie in the
unbounded component of the complement of the other, $K'_l \cup K$ is $\Omega$-convex (as
is $K'_1 \cup K$); if $K'_1$ lies in a hole of $K'_l$ then $K'_1 \cup K$ is $\Omega$-convex because
$K'_l$ has at least two holes; or if $K'_l$ lies in a hole of $K'_1$ then $K'_l \cup K$ is
$\Omega$-convex since $K'_1$ has at least two holes.

\item[(c)] Finally, for infinitely many $l \in \mathbb{N}$, $K$ could be found in holes of $K'_l$. Again we can assume
this is true for all $l$. Then we can find some $l \in \mathbb{N}$ such that for each for every $i=1,\dots,N$, $\phi_{i,n_1}(K) \subset K_l$. Since $\phi_{i,n_l}(K_l) \cap K_l = \emptyset$ we know that $K'_1$ and $K'_l$ are disjoint sets. Because both these sets
contain $K$ in one of their holes, we must have that either $K'_1$ lies in a hole of $K'_l$
or $K'_l$ lies in a hole of $K'_1$. Then, as before, we argue that either $K'_l \cup K$ or
$K'_1 \cup K$ is $\Omega$-convex.
\end{enumerate}
\end{proof}
\begin{thm}
Let $\Omega\subset \mathbb{C}$ be a domain of infinite connectivity, and $\phi_1,\dots,\phi_N$ be injective holomorphic self-maps of $\Omega$. Then the following assertions
are equivalent:
\begin{enumerate}
\item The $N\geq2$ sequences $C_{\phi_1},\dots,C_{\phi_N}$ are disjoint topologically transitive;
\item For every $\Omega$-convex compact subset $K$ of $\Omega$, there is some $n\in \mathbb{N}$
such that the sets
 $$\phi_{1,n}(K),\dots,\phi_{N,n}(K)$$
  are $\Omega$-convex and
 $$K,\phi_{1,n}(K),\dots,\phi_{N,n}(K)$$
  are pairwise disjoint.
\end{enumerate}
\end{thm}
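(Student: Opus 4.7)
The direction $(1)\Rightarrow(2)$ is already contained in Theorem \ref{thm11}: disjoint topological transitivity of the operators $C_{\phi_1},\dots,C_{\phi_N}$ is precisely $d$-topological transitivity in the sense of the iterate sequences, so for every $\Omega$-convex compact subset $K$ of $\Omega$ the conclusion of Theorem \ref{thm11} is exactly statement (2).

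For the converse $(2)\Rightarrow(1)$, the plan is a standard Runge-type gluing. Given arbitrary nonempty open sets $U_0,\dots,U_N\subset H(\Omega)$, I would first shrink each $U_i$ to a basic neighborhood of the form $\{g\in H(\Omega):\sup_K|g-f_i|<\varepsilon\}$ sharing a common compact $K$ and a common tolerance $\varepsilon>0$. Using that $\Omega$ is infinitely connected, I would then enlarge $K$ to a connected $\Omega$-convex compact set $K'$ that has at least two holes, each containing a point of $\mathbb{C}\backslash\Omega$; this is the set-up required to apply Lemma \ref{lem12}. That lemma then yields some $n\in\mathbb{N}$ such that
$$L:=K'\cup\phi_{1,n}(K')\cup\dots\cup\phi_{N,n}(K')$$
is $\Omega$-convex, the pieces $K',\phi_{1,n}(K'),\dots,\phi_{N,n}(K')$ are pairwise disjoint, and each $\phi_{i,n}$ is injective on a neighborhood of $K'$.

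Next I would define a function $h$ holomorphic on a neighborhood of $L$ by putting $h=f_0$ on a neighborhood of $K'$ and $h=f_i\circ\phi_{i,n}^{-1}$ on a neighborhood of $\phi_{i,n}(K')$ for $i=1,\dots,N$. This is well defined because the pieces are pairwise disjoint, and each $f_i\circ\phi_{i,n}^{-1}$ is holomorphic on a neighborhood of $\phi_{i,n}(K')$ since $\phi_{i,n}$ is an injective holomorphic map and therefore a biholomorphism onto its (open) image. Since $L$ is $\Omega$-convex, Runge's approximation theorem produces $g\in H(\Omega)$ with $\sup_L|g-h|<\varepsilon$. Then $\sup_{K'}|g-f_0|<\varepsilon$ and, for each $i$,
$$\sup_{K'}|g\circ\phi_{i,n}-f_i|=\sup_{\phi_{i,n}(K')}|g-f_i\circ\phi_{i,n}^{-1}|<\varepsilon,$$
so that $g\in U_0\cap C_{\phi_1}^{-n}(U_1)\cap\dots\cap C_{\phi_N}^{-n}(U_N)$, as required.

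The main obstacle is arranging the enlargement $K\mapsto K'$ so that the hypotheses of Lemma \ref{lem12} are genuinely met, namely connectedness together with at least two $\Omega$-holes, while keeping $K'\subset\Omega$ and preserving $\Omega$-convexity; the infinite connectivity of $\Omega$ guarantees enough components of $\hat{\mathbb{C}}\backslash\Omega$ to insert such holes. Once $L$ has been produced the Runge step is entirely routine, so the proof reduces to this preparatory geometric arrangement plus a direct application of Lemma \ref{lem12}.
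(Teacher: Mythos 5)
Your proposal is correct and follows essentially the same route as the paper: the necessity is delegated to Theorem \ref{thm11}, and the sufficiency is obtained by enlarging the compact set to a connected $\Omega$-convex compact with at least two holes, invoking Lemma \ref{lem12} to get an $n$ for which the union $K'\cup\phi_{1,n}(K')\cup\dots\cup\phi_{N,n}(K')$ is $\Omega$-convex with pairwise disjoint pieces, and then gluing the target functions via Runge's theorem. The piecewise definition of $h$ and the transfer of the estimate through $\phi_{i,n}^{-1}$ match the paper's argument step for step.
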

\begin{proof}
The necessity was shown in Theorem \ref{thm11}. We now show sufficiency.
Assume that $(2)$ holds. It suffices to show that $C_{\phi_1},\dots,C_{\phi_N}$ are d-topologically transitive. Let $U_0,\dots,U_N$ be non-empty open subsets of $H(\mathbb{D})$. Then there exist $\varepsilon > 0$, a compact $K \subset \mathbb{D}$, and functions $g_0,\dots,g_N \in H(\mathbb{D})$ such that
$$U_0 \supset \{h \in H(\mathbb{D}):\ \|h - g_0\|_K < \varepsilon\}$$
$$U_1 \supset \{h \in H(\mathbb{D}):\ \|h - g_1\|_K < \varepsilon\}$$
$$\vdots$$
and
$$U_N \supset \{h \in H(\mathbb{D}):\ \|h - g_N\|_K < \varepsilon\}.$$
By making $K$ larger, if needed, we can assume that it is connected, $\Omega$-convex and has at
least two holes. According to Lemma \ref{lem12} and the hypothesis there is some $n \in \mathbb{N}$ such that $\phi_{1,n},\dots,\phi_{N,n}$ are injective on a neighborhood of $K$,
$$K\cup\phi_{1,n}(K)\cup \dots\cup\phi_{N,n}(K)$$
is $\Omega$-convex, and $K,\phi_{1,n}(K),\dots,\phi_{N,n}(K)$ are pairwise disjoint. As a result, the functions $g_i\circ \phi_{i,n}^{-1}$  are holomorphic on $\phi_{i,n}(K)$ respectively for $i=1,\dots,N$, as well as $g_0$ on $K$. According to Runge's theorem (see, e.g., \cite[Ch. 13]{rudin}) there exists a function $h \in H(\Omega)$ such that
$$|h(z) - g_0(z)| < \varepsilon, \ \ \ \textrm{for}\ z \in K,$$
and
$$|h(w) - g_i(\phi_{i,n}^{-1}(w))| < \varepsilon, \ \ \ \textrm{for}\ w \in \phi_{i,n}(K).$$
Hence,
$$|h(\phi_{i,n}(z)) - g_i(z)| < \varepsilon, \ \ \ \textrm{for}\ z \in K.$$
Thus, $h\in U_0\cap C_{\phi_1}^{-n}U_1\cap\dots\cap C_{\phi_N}^{-n}U_N$, and so
$$U_0\cap C_{\phi_1}^{-n}U_1\cap\dots\cap C_{\phi_N}^{-n}U_N\neq\emptyset,$$
which implies that $C_{\phi_1},\dots,C_{\phi_N}$ are disjoint topologically transitive.
\end{proof}
\section{Disjoint $\mathcal{F}$-transitivity on $H(\Omega)$, when $\Omega$ is a simply connected domain}
We assume in this section that $\Omega$ is a simply connected domain of $\mathbb{C}$ with $\Omega\neq \mathbb{C}$. Then it is conformally equivalent to the disk $\mathbb{D}$. As a result, we can and shall assume that $\Omega$ is the disk $\mathbb{D}$ and our results still be valid.
\begin{defn}
Let $\phi_1,\dots,\phi_N$ be $N \geq 2$ holomorphic self-maps of a domain $\Omega$.
Then $\phi_1,\dots,\phi_N$ are said to be disjoint run-away if for every compact $K\subset \Omega$ there is an integer $n$ such that the sets $K$, $\phi_{1,n}(K),\dots,\phi_{N,n}(K)$ are pairwise disjoint.
\end{defn}
\begin{prop}
If $\phi_1,\dots,\phi_N$ are $N\geq2$ holomorphic self-maps of a domain $\Omega$ that are disjoint run-away, then the set
$$\{n\in \mathbb{N}:\ K, \ \phi_{1,n}(K),\dots,\phi_{N,n}(K)\ \textrm{are pairwise disjoint}\}$$ is infinite.
\end{prop}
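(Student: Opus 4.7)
The plan is to prove this by an enlargement argument: from any single value of $n$ witnessing the disjoint run-away property on $K$, one can produce strictly larger witnesses by applying the hypothesis to an enlarged compact set, so the witness set cannot be finite.

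More precisely, set
\[
A(K):=\{n\in\mathbb{N}:\ K,\ \phi_{1,n}(K),\dots,\phi_{N,n}(K)\ \text{are pairwise disjoint}\}.
\]
We may assume $K$ is non-empty (otherwise every $n$ belongs to $A(K)$ trivially). By the disjoint run-away hypothesis, $A(K)\neq\emptyset$; pick some $n_0\in A(K)$. The strategy is to show that for every $n_0\in A(K)$ there exists $m\in A(K)$ with $m>n_0$. Iterating this yields an infinite strictly increasing sequence in $A(K)$.

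Given $n_0$, I would form the enlarged compact set
\[
\widetilde{K}:=K\cup\bigcup_{i=1}^{N}\bigcup_{j=0}^{n_0}\phi_{i,j}(K),
\]
which is a finite union of compact subsets of $\Omega$ and hence compact. Applying the disjoint run-away hypothesis to $\widetilde{K}$ produces an integer $m$ such that $\widetilde{K},\phi_{1,m}(\widetilde{K}),\dots,\phi_{N,m}(\widetilde{K})$ are pairwise disjoint. Since $K\subset\widetilde{K}$, the sets $K,\phi_{1,m}(K),\dots,\phi_{N,m}(K)$ are pairwise disjoint as well, i.e.\ $m\in A(K)$.

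The one thing to verify — the only mildly non-routine step — is that $m>n_0$. If instead $m\leq n_0$, then by the definition of $\widetilde{K}$ we would have $\phi_{i,m}(K)\subset\widetilde{K}$ for each $i$, while simultaneously $\phi_{i,m}(K)\subset\phi_{i,m}(\widetilde{K})$. Because $K$ is non-empty, $\phi_{i,m}(K)$ is non-empty, so
\[
\widetilde{K}\cap\phi_{i,m}(\widetilde{K})\supset\phi_{i,m}(K)\neq\emptyset,
\]
contradicting pairwise disjointness. Therefore $m>n_0$. Starting from $n_0$ and iterating this construction produces $n_0<n_1<n_2<\cdots$ all lying in $A(K)$, proving that $A(K)$ is infinite.
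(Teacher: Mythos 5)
Your proof is correct and follows essentially the same enlargement idea as the paper: apply the disjoint run-away hypothesis to a larger compact set built from $K$ and images of $K$, then pull the pairwise disjointness back to $K$ and iterate. Your variant is in fact a bit cleaner — by placing all iterates $\phi_{i,j}(K)$, $0\le j\le n_0$, into $\widetilde{K}$ you force each new witness to satisfy $m>n_0$, whereas the paper enlarges only by the images at the single witness $n$ and handles the possible coincidence $m=n$ via the doubling trick $\phi_{i,n}\circ\phi_{i,n}=\phi_{i,2n}$, leaving the ``repeat infinitely many times'' step more informal.
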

\begin{proof}
Let $K\subset \Omega$ be a compact set,
then for $i,j\in \llbracket 0,N \rrbracket$ with $i\neq j$, there exists $n\in \mathbb{N}$ such that
$$\phi_{i,n}(K)\cap K= \emptyset \ \ \textrm{and} \ \ \phi_{i,n}(K)\cap \phi_{j,n}(K)=\emptyset.$$
Let $K_1:=K\cup\phi_{i,n}(K)\cup\phi_{j,n}(K)$ then $K_1$ is a non-empty compact subset of $\Omega$. Again because $\phi_1,\dots,\phi_N$ are disjoint run-away, there is an $m\in \mathbb{N}$ such that
$$\phi_{i,m}(K_1)\cap K_1=\emptyset \ \ \textrm{and} \ \ \phi_{i,m}(K_1)\cap\phi_{j,m}(K_1)=\emptyset.$$
Hence, if $m=n$, $$\phi_{i,2n}(K)\cap K=\emptyset \ \ \textrm{and} \ \ \phi_{i,2n}(K)\cap \phi_{j,2n}(K)=\emptyset,$$
if $m\neq n$, $$\phi_{i,m}(K)\cap K=\emptyset \ \ \textrm{and} \ \ \phi_{i,m}(K)\cap \phi_{j,m}(K)=\emptyset.$$ We can use this process infinitely many times.
\end{proof}
\begin{thm}
Let $\phi_1,\dots,\phi_N$ be $N\geq2$ holomorphic self-maps of $\mathbb{D}$. Then the following assertions
are equivalent:
\begin{enumerate}
\item The operators $C_{\phi_1},\dots,C_{\phi_N}$ are disjoint $\mathcal{F}$-transitive;
\item
\begin{enumerate}
\item each of $\phi_1,\dots,\phi_N$ is injective and
\item for every compact subset $K$ of $\mathbb{D}$ we have that $$\{n\in \mathbb{N}:\ K,\phi_{1,n}(K),\dots,\phi_{N,n}(K)\textrm{ are pairwise disjoint}\}\in \mathcal{F}.$$
\end{enumerate}
\end{enumerate}
\end{thm}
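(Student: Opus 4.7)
The plan is to prove the two implications separately, with the reverse direction being the more substantive one.

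For $(1)\Rightarrow(2)$, I would first observe that disjoint $\mathcal{F}$-transitivity entails disjoint topological transitivity, which in turn forces each $C_{\phi_i}$ to be hypercyclic on $H(\mathbb{D})$; the standard range-density argument then gives injectivity of each $\phi_i$. For the Furstenberg condition, I fix a compact $K\subset \mathbb{D}$ and exhibit basic open sets whose transitivity set is contained in the pairwise-disjoint set. Pick distinct complex numbers $c_0=0,c_1,\dots,c_N$ with $|c_i-c_j|>2$ for $i\neq j$ (say $c_i=3i$), and let $U_i:=\{h\in H(\mathbb{D}):\|h-c_i\|_K<1\}$. If $h$ lies in $U_0\cap C_{\phi_1}^{-n}U_1\cap\dots\cap C_{\phi_N}^{-n}U_N$, then $h(K)\subset D(0,1)$ and $h(\phi_{i,n}(K))\subset D(c_i,1)$; since the target disks are pairwise disjoint, so are $K,\phi_{1,n}(K),\dots,\phi_{N,n}(K)$. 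Hence the transitivity set (which is in $\mathcal{F}$) is contained in the pairwise-disjoint set, and upward closure of $\mathcal{F}$ yields the conclusion.

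For $(2)\Rightarrow(1)$, given non-empty open $U_0,\dots,U_N\subset H(\mathbb{D})$, I would first shrink them to basic neighborhoods $V_i=\{h:\|h-g_i\|_K<\varepsilon\}\subset U_i$, where, after enlarging, $K$ can be taken to be a common closed disk $\overline{D(0,r)}$ with $r<1$. Set $A:=\{n:K,\phi_{1,n}(K),\dots,\phi_{N,n}(K)\text{ pairwise disjoint}\}$, which belongs to $\mathcal{F}$ by hypothesis. For each $n\in A$, injectivity of $\phi_{i,n}$ ensures that $g_i\circ\phi_{i,n}^{-1}$ is holomorphic on a neighborhood of $\phi_{i,n}(K)$, so I can piece together a function $F$ holomorphic on a neighborhood of $M:=K\cup\phi_{1,n}(K)\cup\dots\cup\phi_{N,n}(K)$ by setting $F|_K=g_0$ and $F|_{\phi_{i,n}(K)}=g_i\circ\phi_{i,n}^{-1}$. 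An application of Runge's theorem produces $h\in H(\mathbb{D})$ with $\|h-F\|_M<\varepsilon$, which, unpacked, places $h$ in $U_0\cap C_{\phi_1}^{-n}U_1\cap\dots\cap C_{\phi_N}^{-n}U_N$. Thus $A$ sits inside the transitivity set, and upward closure finishes the proof.

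The main obstacle is verifying the hypothesis of Runge's theorem: that $M$ is $\mathbb{D}$-convex. Since $K$ is a closed topological disk and each $\phi_{i,n}$ is injective and holomorphic, the images $\phi_{i,n}(K)$ are closed topological disks as well, so $M$ is a finite pairwise-disjoint union of closed topological disks in $\hat{\mathbb{C}}$. Its complement in $\hat{\mathbb{C}}$ is therefore an $(N+1)$-holed sphere, which is connected, so $M$ has no hole and is $\mathbb{D}$-convex. This topological free-pass is precisely what makes the simply connected setting cleaner than the infinitely connected case of Section~4, where additional $\Omega$-convexity of the images must be imposed and the enlargement argument of Lemma~\ref{lem12} is needed to ensure that the union itself remains $\Omega$-convex.
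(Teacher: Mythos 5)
Your proposal is correct and follows essentially the same route as the paper: in the forward direction both arguments choose explicit basic open sets whose simultaneous visitation forces the values of one function to lie in pairwise disjoint regions (you use the disks $\{h:\ \|h-c_i\|_K<1\}$ with $c_i=3i$, the paper uses annuli defined by modulus bounds), giving $N(U_0,\dots,U_N)\subset\{n\in\mathbb{N}:\ K,\phi_{1,n}(K),\dots,\phi_{N,n}(K)\ \textrm{pairwise disjoint}\}$ and then upward closure of $\mathcal{F}$; in the converse direction both enlarge $K$ to a hole-free compact and use Runge's theorem to produce $h\in U_0\cap C_{\phi_1}^{-n}U_1\cap\dots\cap C_{\phi_N}^{-n}U_N$ for every $n$ in the disjointness set, so that set is contained in $N(U_0,\dots,U_N)$. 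Your explicit verification that $K\cup\phi_{1,n}(K)\cup\dots\cup\phi_{N,n}(K)$ has connected complement (a disjoint union of closed topological disks) is a useful detail that the paper leaves implicit when it invokes Runge for a simply connected compact $L$.
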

\begin{proof}
$(1)\Longrightarrow(2)$: Assume that $C_{\phi_1},\dots,C_{\phi_N}$ are disjoint $\mathcal{F}$-transitive. For $(a)$, since every operator $C_{\phi_i}$, $i=1,\dots,N$ is hypercyclic, this implies that the maps $\phi_1,\dots,\phi_N$ are injective. For $(b)$, let $K\subset \mathbb{D}$ be a compact and $0<M<1$ such that $K\subset B(0,M):=\{z\in \mathbb{C}:\ |z|<M\}$. Put

$$U_0:=\{f\in H(\mathbb{D}):\ |f(z)|<M,\ \forall z\in B(0,M)\}$$ and let $$U_i:=\{f\in H(\mathbb{D}):\ \frac{2i-1}{M}<|f(z)|<\frac{2i}{M},\ \forall z\in B(0,M)\}$$ for $i=1,\dots,N$. The sets $U_i$ are not empty for every $i=0,1,\dots,N$, since the constant functions satisfying the condition contain in $U_i(i = 0,1,\dots,N)$. Moreover, they are open, indeed, $U_0$ is clearly open and if $f \in U_i$, then the disc $B(f,\delta) \subset
 U_i$, where
 $$\delta = \min\{\frac{2i}{M}-\sup_{z\in K}|f(z)|,\sup_{z\in K}|f(z)|-\frac{2i-1}{M}\}.$$
Since $C_{\phi_1},\dots,C_{\phi_N}$ are disjoint $\mathcal{F}$-transitive, we have that
$$N(U_0,\dots,U_N):=\{n\in \mathbb{N}:\ U_0\cap C_{\phi_1}^{-n} U_1\cap C_{\phi_2}^{-n} U_2\cap\dots\cap C_{\phi_N}^{-n} U_N\neq \emptyset\}\in \mathcal{F}.$$
In order to prove $(2)$, we shall prove that
$$N(U_0,\dots,U_N)\subset \{n\in \mathbb{N}:\ K,\phi_{1,n}(K),\dots,\phi_{N,n}(K)\textrm{ are pairwise disjoint}\}.$$
Let $n_0\in \mathbb{N}$ such that $U_0\cap C_{\phi_{1}}^{-n_0} U_1\cap C_{\phi_2}^{-n_0} U_2\cap\dots\cap C_{\phi_N}^{-n_0} U_N\neq\emptyset$, then there is $f\in H(\mathbb{D})$ such that $f\in U_0$ and $C_{\phi_{i}}^{n_0}f\in U_i$ for every $i=1,\dots,N$. Hence $$\sup_{z\in B(0,M)}|f(z))|\leq M<\frac{1}{M}\leq \sup_{z\in B(0,M)}|f(\phi_{1,n_0}(z))|<\dots<
\sup_{z\in B(0,M)}|f(\phi_{N,n_0}(z))|,$$
and
$$\sup_{z\in B(0,M)}|f(\phi_{i,n_0}(z))|\leq\frac{2i}{M}<\frac{2j-1}{M}\leq\inf_{z\in B(0,M)}|f(\phi_{j,n_0}(z))|$$ for every $i< j=1,\dots,N,$
Thus, $\phi_{i,n_0}B(0,M)\cap B(0,M)=\emptyset$ and $\phi_{i,n_0}(B(0,M))\cap \phi_{j,n_0}(B(0,M))=\emptyset$ for every $i\neq j=1,\dots,N$.
Hence,  $\phi_{i,n_0}(K)\cap K=\emptyset$ and $\phi_{i,n_0}(K)\cap \phi_{j,n_0}(K)=\emptyset$.
Thus,
$$n_0\in\{n\in \mathbb{N} \mbox{ : } K,\phi_{1,n}(K),\dots,\phi_{N,n}(K)\ \textrm{are pairewise disjoint}\}.$$
Since $N(U_0,\dots,U_N)\in \mathcal{F}$ because $C_\phi$ is supposed to be disjoint $\mathcal{F}$-transitive, we have $\{n\in \mathbb{N}:$ $K,\phi_{1,n}(K),\dots,\phi_{N,n}(K)\ \textrm{are pairewise disjoint}\}\in \mathcal{F}$.\\
$(2)\Longrightarrow(1)$: Suppose that for every compact $K \subset \mathbb{D}$, the set
$$\{n \in \mathbb{N}: K,\phi_{1,n}(K),\dots,\phi_{N,n}(K)\textrm{ are pairwise disjoint}\}$$
belongs to $\mathcal{F}$. Let $U_0,\dots,U_N$ be non-empty open subsets of
$H(\mathbb{D})$. Then there exist $\varepsilon > 0$, a compact $K \subset \mathbb{D}$, and functions $g_0,\dots,g_N
\in H(\mathbb{D})$ such that
$$U_0 \supset\{h \in H(\mathbb{D}): \|h - g_0\|_K < \varepsilon\}$$ $$U_1 \supset\{h \in H(\mathbb{D}): \|h - g_1\|_K < \varepsilon\}$$ $$\vdots$$ and $$U_N \supset\{h \in H(\mathbb{D}): \|h - g_N\|_K < \varepsilon\}.$$
Let $L$ be a simply connected compact containing $K$, and let $n \in \mathbb{N}$ such
that $L,\phi_{1,n},\dots,\phi_{N,n}(L)$ are pairwise disjoint. Then the function $g_0$ is holomorphic on some neighborhood of $L$ and each $g_l \circ (\phi_{l,n})^{-1}$ is holomorphic on some
neighborhood of $\phi_{l,n}(L)$, $l=1,\dots,N$.
It results from Runge's approximation theorem that
there exists a function $h \in H(\mathbb{D})$ such that
$$\sup_{z\in L}
|g_0(z) - h(z)| < \varepsilon \ \ \textrm{and} \ \ \sup_{z\in \phi_{l,n}(L)}
|g_l\circ(\phi_{l,n})^{-1}(z) - h(z)| < \varepsilon.$$
Hence,
$$\sup_{z\in L}|g_0(z) - h(z)| < \varepsilon \ \ \textrm{and} \ \ \sup_{z\in L}|g_l(z) - h(\phi_{l,n}(z))| < \varepsilon,$$
which implies that
$$\sup_{z\in K}|g_0(z) - h(z)| < \varepsilon \ \ \textrm{and} \ \ \sup_{z\in K}|g_l(z) - h(\phi_{l,n}(z))| < \varepsilon.$$
Thus, $h\in U_0$ and $C_{\phi_{l}}^n(h) \in U_l$ for $l=1,\dots,N$.
Hence, $h\in U_0\cap C_{\phi_{1}}^{-n}U_1\cap\dots\cap C_{\phi_{N}}^{-n}U_N$.
This implies that
$$U_0\cap C_{\phi_{1}}^{-n}U_1\cap\dots\cap C_{\phi_{N}}^{-n}U_N\neq\emptyset.$$
Consequently,
$$\{n \in \mathbb{N}:\ L,\phi_{1,n}(L),\dots,\phi_{N,n}(L)\textrm{ are pairwise disjoint}\} \subset N(U_0,\dots,U_N),$$
and since $\{n \in \mathbb{N}:\ L,\phi_{1,n}(L),\dots,\phi_{N,n}(L)\textrm{ are pairwise disjoint}\} \in \mathcal{F},$
we have that $N(U_0,\dots,U_N) \in \mathcal{F}$. This shows that $C_{\phi_{1}},\dots,C_{\phi_N}$ are disjoint $\mathcal{F}$-transitive.
\end{proof}
\section{Disjoint $\mathcal{F}$-transitivity on $H(\Omega)$, when $\Omega$ is not simply connected}
In this section, we characterize the disjoint $\mathcal{F}$-transitivity of a sequence of composition operators on $H(\Omega)$, where $\Omega$ is not simply connected. In case $\Omega$ is finitely connected not simply connected we know that there are no disjoint transitivity of composition operator. Thus, it still to study the case $\Omega$ is infinitely connected.
\begin{thm}\label{t1}
Let $\Omega\subset \mathbb{C}$ be a domain of infinite connectivity, and let
$\phi_1,\dots,\phi_{N}$ be $N$ injective holomorphic self-map of $\Omega$. Then the following assertions
are equivalent:
\begin{enumerate}
\item The operator $C_{\phi_1},\dots,C_{\phi_N}$ are disjoint $\mathcal{F}$-transitive;
\item For every $\Omega$-convex compact subset $K$ of $\Omega$, the set
$\{n \in \mathbb{N}:$ $\phi_{i,n}(K)$ for $i=1,\dots,N$ are $\Omega$-convex and $K,\phi_{1,n}(K),\dots,\phi_{N,n}(K)$ are pairwise disjoint$\}$ belongs to $\mathcal{F}$.
\end{enumerate}
\end{thm}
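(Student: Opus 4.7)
The statement is the Furstenberg-family refinement of the $d$-topological transitivity characterization proved at the end of Section~2: it demands that the set of good iterates lie in $\mathcal{F}$ rather than merely be non-empty. My plan is to retrace each direction of that proof while tracking membership in $\mathcal{F}$ throughout. For $(1)\Rightarrow(2)$ I would replay the necessity argument of Theorem~\ref{thm11}, but with a \emph{fixed} collection of open sets whose transitivity set embeds into the set appearing in $(2)$. For $(2)\Rightarrow(1)$ I would combine an $\mathcal{F}$-strengthening of Lemma~\ref{lem12} with Runge's theorem.

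\textbf{$(1)\Rightarrow(2)$.} Fix an $\Omega$-convex compact $K\subset\Omega$. As in Theorem~\ref{thm11}, enlarge $K$ to a connected $\Omega$-convex compact $L$ with exactly $p$ Jordan-curve-bounded holes $O_1,\ldots,O_p$ and outer boundary $\gamma_0$, and pick $a_j\in O_j\cap(\mathbb{C}\setminus\Omega)$ together with $b\in\Omega\setminus L$. For a large integer $m$, set
$$g(z):=m\,\frac{(z-b)^{p+1}}{\prod_{j=1}^{p}(z-a_j)},$$
and choose constants $c_1<c_2<\cdots<c_N$ in sufficiently fast geometric progression that the ranges $c_i\,g(L)\subset\mathbb{C}$ are pairwise disjoint and $\min_{z\in L}|c_i g(z)|\gg 1$. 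Put $g_i:=c_i g$ and define
$$U_0:=\{f\in H(\Omega):\ \|f\|_L<1\},\qquad U_i:=\{f\in H(\Omega):\ \|f-g_i\|_L<\varepsilon\}\quad(i=1,\ldots,N),$$
for a sufficiently small $\varepsilon>0$. By $(1)$, $N(U_0,\ldots,U_N)\in\mathcal{F}$, and I claim it is contained in the set from $(2)$. Indeed, for any witness $f$ at such an $n$: size comparison forces $\phi_{i,n}(L)\cap L=\emptyset$ (as $|f|<1$ on $L$ but $f\circ\phi_{i,n}$ is close to the large $g_i$); disjointness of the annular ranges $g_i(L)$ and $g_j(L)$ forces $\phi_{i,n}(L)\cap\phi_{j,n}(L)=\emptyset$ for $i\neq j$ (a coincidence $\phi_{i,n}(z_i)=\phi_{j,n}(z_j)$ would place $f$ within $\varepsilon$ of both $g_i(z_i)$ and $g_j(z_j)$); and since $g_i'/g_i=g'/g$ is independent of $c_i$, the winding-number residue argument of Theorem~\ref{thm11} (using \eqref{equ11}) still forces each $\phi_{i,n}(L)$, hence $\phi_{i,n}(K)$ via \cite[Lemma~3.11]{C2}, to be $\Omega$-convex. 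Upward closure of $\mathcal{F}$ then puts the target set of $(2)$ in $\mathcal{F}$.

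\textbf{$(2)\Rightarrow(1)$.} Given $U_0,\ldots,U_N\in\mathcal{U}(H(\Omega))$, extract $\varepsilon>0$, a compact $K\subset\Omega$, and $g_0,\ldots,g_N\in H(\Omega)$ with $U_i\supset\{h:\|h-g_i\|_K<\varepsilon\}$. Enlarging $K$, assume it is connected, $\Omega$-convex, and has at least two holes. By hypothesis,
$$A:=\{n\in\mathbb{N}:\ \phi_{i,n}(K)\text{ is }\Omega\text{-convex for each }i,\ K,\phi_{1,n}(K),\ldots,\phi_{N,n}(K)\text{ pairwise disjoint}\}\in\mathcal{F}.$$
An $\mathcal{F}$-version of Lemma~\ref{lem12}, described in the next paragraph, then yields $A'\subset A$ with $A'\in\mathcal{F}$ such that $K\cup\phi_{1,n}(K)\cup\cdots\cup\phi_{N,n}(K)$ is $\Omega$-convex for every $n\in A'$. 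For any such $n$, $g_0$ is holomorphic on a neighborhood of $K$ and each $g_i\circ\phi_{i,n}^{-1}$ on a neighborhood of $\phi_{i,n}(K)$; Runge's theorem (\cite[Ch.~13]{rudin}) produces $h\in H(\Omega)$ simultaneously $\varepsilon$-close to $g_0$ on $K$ and to $g_i\circ\phi_{i,n}^{-1}$ on $\phi_{i,n}(K)$, i.e. $h\in U_0\cap C_{\phi_1}^{-n}U_1\cap\cdots\cap C_{\phi_N}^{-n}U_N$. Thus $A'\subset N(U_0,\ldots,U_N)$, and upward closure gives $N(U_0,\ldots,U_N)\in\mathcal{F}$.

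\textbf{Main obstacle.} The crux is the $\mathcal{F}$-version of Lemma~\ref{lem12}: the original lemma produces only \emph{one} $n$ for which the union is $\Omega$-convex, via a three-case analysis whose cases (b) and (c) invoke a second index to repair the configuration. For the Furstenberg setting this paired-choice trick is unavailable and one must prepare $K$ geometrically so that the bad cases are excluded for every $n\in A$. Exploiting the infinite connectivity of $\Omega$, I would select $K$ as a connected $\Omega$-convex compact whose holes are small neighborhoods of carefully chosen points of $\mathbb{C}\setminus\Omega$ and whose outer profile is spread broadly enough across $\Omega$ that no $\Omega$-convex image $\phi_{i,n}(K)$ (which has the same number of holes as $K$ by injectivity, cf.\ \cite[p.~276]{remmert}) can be trapped inside a hole of $K$; a symmetric geometric control rules out $K$ lying inside a hole of $\phi_{i,n}(K)$. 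This reduces every $n\in A$ to case~(a) of Lemma~\ref{lem12} and yields $A'=A\in\mathcal{F}$. Verifying that this geometric preparation can be performed uniformly for the compact that arises from any given open sets $U_0,\ldots,U_N$ is the technical heart of the proof.
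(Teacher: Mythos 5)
Your direction $(1)\Rightarrow(2)$ follows the same route as the paper's Theorem~\ref{thm22}: fix the compact $K$, enlarge it to the multiply connected compact $L$ of \cite[Theorem 3.12]{C2}, and manufacture one fixed tuple of open sets whose return set $N(U_0,\dots,U_N)$ is contained in the set of $(2)$, so that upward heredity of $\mathcal{F}$ finishes the argument. Your implementation differs only in the choice of open sets (sup-norm balls around scaled copies of $g$, versus the paper's modulus bands $U_i$ intersected with the auxiliary set $W_0$ controlling $f'/f$), and the disjointness part of your argument is fine. One detail you must still supply: you invoke ``the winding-number argument of Theorem~\ref{thm11} (using \eqref{equ11})'', but \eqref{equ11} was obtained there from the limit relation \eqref{equ21} along a d-hypercyclic orbit, which is unavailable in the fixed-open-set setting. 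This is repairable: since the curves $\gamma_j$ lie in $L$ and $\varepsilon<\min_L|g_i|$, the inequality $|f\circ\phi_{i,n}-g_i|<|g_i|$ on $\gamma_j$ gives, by Rouch\'e (dog-on-a-leash), that the winding number of $f\circ\phi_{i,n}\circ\gamma_j$ about $0$ equals that of $g_i\circ\gamma_j$, i.e.\ $1$, and also that $f$ does not vanish on $\phi_{i,n}(\gamma_j)$; this is precisely the role the paper assigns to $W_0$. With that patch, this half is sound.

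The direction $(2)\Rightarrow(1)$ contains a genuine gap, and it sits exactly where you place your ``main obstacle''. You assume an $\mathcal{F}$-analogue of the three-case Lemma~\ref{lem12} is needed and propose to force every $n$ in the set $A$ into case (a) by a geometric preparation of $K$ (small holes around chosen boundary points, broadly spread outer profile). This preparation is not carried out, and it cannot work in general: if the components of $\mathbb{C}\setminus\Omega$ accumulate (e.g.\ $\Omega=\mathbb{D}\setminus(\{0\}\cup\{1/n:n\geq2\})$), then any hole of any compact $K\subset\Omega$ that contains such an accumulation point automatically contains infinitely many components of $\mathbb{C}\setminus\Omega$, so no choice of the shape of $K$ prevents an $\Omega$-convex image $\phi_{i,n}(K)$ (with the same number of holes, by injectivity) from sitting inside a hole of $K$, nor $K$ from sitting inside a hole of an image; holomorphic maps can contract or surround at will. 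Since $\mathcal{F}$ is an arbitrary Furstenberg family you also cannot discard the offending indices, so your set $A'$ is never produced. The paper resolves this point quite differently and much more cheaply: its Lemma~\ref{lem21} asserts that for a connected $\Omega$-convex $K$ with at least two holes, \emph{every} $n$ for which the $\phi_{i,n}(K)$ are $\Omega$-convex and $K,\phi_{1,n}(K),\dots,\phi_{N,n}(K)$ are pairwise disjoint already makes the union $K\cup\phi_{1,n}(K)\cup\dots\cup\phi_{N,n}(K)$ $\Omega$-convex (every hole of the union is claimed to inherit a point of $\mathbb{C}\setminus\Omega$ from the $\Omega$-convexity of the pieces), so the set of $(2)$ itself is contained in $N(U_0,\dots,U_N)$ and no selection, second index, or preparation of $K$ is needed. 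In short: your Runge step is the same as the paper's, but the containment lemma that makes the argument compatible with a general Furstenberg family is missing from your proposal, and the substitute you sketch would fail.
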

To prove this theorem, we need the following theorem and lemma.
\begin{thm}\label{thm22}
Let $\Omega\subset \mathbb{C}$ be a domain, and $\phi_1,\dots,\phi_N$ be $N$ injective holomorphic self-maps of $\Omega$. Assume that $C_{\phi_1},\dots,C_{\phi_N}$ are disjoint $\mathcal{F}$-transitive. Then, for every $\Omega$-convex
compact subset $K$ of $\Omega$, the set
\small{
$$\{n \in \mathbb{N}:\ \phi_{1,n}(K),\dots,\phi_{N,n}(K)\textrm{ are }\Omega-convex\
\textrm{and}\ K,\phi_{1,n}(K),\dots,\phi_{N,n}(K)\textrm{ are pairwise disjoint}\}$$
}
belongs to $\mathcal{F}$.
\end{thm}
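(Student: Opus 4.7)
The strategy is to upgrade the single-$n$ argument of Theorem \ref{thm11} to a family argument by packaging its geometric conclusions into the non-emptiness of an intersection of carefully chosen open sets $U_0,U_1,\dots,U_N\in\mathcal{U}(H(\Omega))$. Disjoint $\mathcal{F}$-transitivity will then place $N(U_0,\dots,U_N)$ in $\mathcal{F}$, and heredity of $\mathcal{F}$ will finish the job.

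Given an $\Omega$-convex compact $K\subset\Omega$, I would first enlarge $K$ to a connected $\Omega$-convex compact $L$ with outer Jordan boundary $\gamma_0$ and $p$ inner Jordan boundaries $\gamma_1,\dots,\gamma_p$, as in Theorem \ref{thm11}, and fix a further $\Omega$-convex compact $L'$ with $L\subset\operatorname{int}(L')$. Pick $b\in\operatorname{int}(L)\setminus K$ and $a_j\in O_j\cap(\mathbb{C}\setminus\Omega)$ in each hole, and define
$$g(z)=\frac{(z-b)^{p+1}}{\prod_{j=1}^p(z-a_j)}\in H(\Omega),\qquad g_i=\beta_i\,g\quad(i=1,\dots,N),$$
with scalars $0<\beta_1<\dots<\beta_N$ chosen so that the intervals $[\beta_i m_K,\beta_i M_K]$ are pairwise disjoint and bounded away from $0$, where $m_K=\min_K|g|>0$ and $M_K=\max_K|g|$. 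All $g_i$ share the same zero locus (at $b$) and poles (at $a_j$), so $\frac{1}{2\pi i}\int_{\gamma_q}g_i'/g_i\,dz=1$ for $q=0,\dots,p$, as in Theorem \ref{thm11}.

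Next I would select $\delta,\varepsilon>0$ small enough that (i) the intervals $[\beta_i m_K-\varepsilon,\beta_i M_K+\varepsilon]$ remain pairwise disjoint and disjoint from $[0,\delta]$; (ii) $\varepsilon<\min_{q,i}\min_{\gamma_q}|g_i|$, which is positive since $b\in\operatorname{int}(L)$ avoids every $\gamma_q$ and $a_j\notin\Omega$; and (iii) the Cauchy derivative estimate on the pair $(L,L')$ converts $\varepsilon$-closeness of a holomorphic function to $g_i$ on $L'$ into closeness of its derivative to $g_i'$ on $L$, sufficient to keep $\frac{1}{2\pi i}\int_{\gamma_q}F'/F\,dz$ within $1/2$ of its $g_i$-value whenever $F$ lies in the corresponding open neighborhood. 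Define the non-empty open sets
$$U_0=\{f\in H(\Omega):\sup_{L'}|f|<\delta\},\qquad U_i=\{f\in H(\Omega):\sup_{L'}|f-g_i|<\varepsilon\},$$
so that by hypothesis $N(U_0,\dots,U_N)\in\mathcal{F}$; it then remains to prove the inclusion $N(U_0,\dots,U_N)\subseteq A_K$, where $A_K$ denotes the target set in the statement.

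Fix $n\in N(U_0,\dots,U_N)$ and $f$ witnessing membership. For pairwise disjointness, the modulus-separation argument of Section 3 applies verbatim: any $w\in\phi_{i,n}(K)\cap K$ would force $|f(w)|<\delta$ while $|f(w)|=|f(\phi_{i,n}(z))|\geq\beta_i m_K-\varepsilon$, and any $w\in\phi_{i,n}(K)\cap\phi_{j,n}(K)$ with $i\neq j$ would place $|f(w)|$ in two disjoint intervals. For $\Omega$-convexity of $\phi_{i,n}(K)$, condition (ii) ensures $f\circ\phi_{i,n}$ is zero-free on every $\gamma_q$, and condition (iii) together with the integrality of $\frac{1}{2\pi i}\int_{\gamma_q}(f\circ\phi_{i,n})'/(f\circ\phi_{i,n})\,dz$ forces each winding number to equal $1$; Remmert's theorem and the orientation argument at the end of the proof of Theorem \ref{thm11} then deliver $\Omega$-convexity of $\phi_{i,n}(L)$, hence of $\phi_{i,n}(K)$ via \cite[Lemma 3.11]{C2}. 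The main technical point is the simultaneous calibration in (i)--(iii) across the finitely many $i$ and $q$, balancing three competing scales: $\beta_1 m_K$ (moduli separation), $\min_{q,i}\min_{\gamma_q}|g_i|$ (zero-freeness), and the Cauchy constant of the pair $(L,L')$ (winding-number control). Once this bookkeeping is performed, the inclusion $A_K\supset N(U_0,\dots,U_N)$ combined with heredity of $\mathcal{F}$ yields $A_K\in\mathcal{F}$, as required.
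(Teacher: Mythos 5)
Your proposal is correct, and its skeleton coincides with the paper's proof of Theorem \ref{thm22}: both enlarge $K$ to the compact $L$ from the proof of \cite[Theorem 3.12]{C2}, compare the orbit functions with scalar multiples of the fixed rational function $g(z)=(z-b)^{p+1}/\prod_{j}(z-a_j)$ (your $\beta_i g$ versus the paper's $r^{i-1}g$ with $0<r<m/M$), use modulus separation on $K$ to force $K,\phi_{1,n}(K),\dots,\phi_{N,n}(K)$ pairwise disjoint, force positivity of the winding integrals $\frac{1}{2\pi i}\int_{\gamma_q}\frac{(f\circ\phi_{i,n})'}{f\circ\phi_{i,n}}\,dz$, and finish with Remmert's hole count, the orientation argument of Theorem \ref{thm11}, \cite[Lemma 3.11]{C2}, and upward heredity of $\mathcal{F}$. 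Where you genuinely differ is in how the logarithmic-derivative control is encoded: the paper introduces the auxiliary set $W_0$ and zero-freeness clauses requiring $f\neq 0$ and $\bigl|\frac{f'}{f}-\frac{g'}{g}\bigr|<\varepsilon$ on all of $\Omega\setminus\{b\}$, whereas you impose only sup-norm closeness to $g_i$ on an enlarged compact $L'\supset L$ and recover derivative (hence log-derivative) control on the curves $\gamma_q\subset L$ via Cauchy estimates, with zero-freeness on the curves coming from $\varepsilon<\min_{q,i}\min_{\gamma_q}|g_i|$. This variation buys something real: your $U_0,\dots,U_N$ are honest open sets of $H(\Omega)$ (sup-norm balls over the compact $L'$), while the paper's conditions posed on the non-compact set $\Omega\setminus\{b\}$ are not obviously open in the compact-open topology, and openness is exactly what is needed to invoke disjoint $\mathcal{F}$-transitivity; the price is the finite calibration of $\delta$, $\varepsilon$, $\beta_i$ and the Cauchy constant that you carry out. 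Note also that your weakening of the zero-freeness hypothesis is harmless: the contradiction only uses that $f$ does not vanish on the image curves $\phi_{i,n}(\gamma_l)$ (which your condition (ii) gives) together with non-negativity of the zero count of $f$ in the offending hole, so nothing from the paper's global requirement is actually needed.
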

\begin{proof}
Let $K$ be a $\Omega$-convex compact subset of $\Omega$. Again by the proof of \cite[Theorem 3.12]{C2} we will be able to construct
a compact $L$ that contains $K$ such that:
\begin{enumerate}
\item $L$ is $\Omega$-convex and has a finite number of holes $O_1,\ldots,O_p$ whose
boundaries are Jordan curves $\gamma_1,\ldots,\gamma_p$;
\item $L$ is connected and its outer boundary $\gamma_0$ surrounds all the $p$ holes.
\end{enumerate}


Let
$$g(z)=\frac{(z-b)^{p+1}}{\prod_{j=1}^{p}(z-a_j)},$$ for every $z\in \Omega$ and put $g_m(z)=mg(z)$ for $m\neq0$.
Then for every $j=1,\dots,p$ we have that
$$\frac{1}{2i\pi}\int_{\gamma_j}\frac{g'(z)}{g(z)}dz=1\ \ \mbox{ and }\ \ \frac{1}{2i\pi}\int_{\gamma_0}\frac{g'(z)}{g(z)}dz=1.$$
Choose two positive constant $M$ and $m$ such that for every $z\in K$, $m<|g(z)|<M.$
We put
$$U_0 =\{f \in H(\Omega):\ |f(z)| >2M\textrm{ for every }z\in K\ \textrm{and}\ f(z)\neq 0\textrm{ for every }z \in \Omega\backslash\{b\}\},$$
and
{\small $$U_i=\{f \in H(\Omega):\ r^{i-1}m<|f(z)|<r^{i-1}M\textrm{ for every }z\in K\ \textrm{and}\ f(z)\neq 0\textrm{ for every }z \in \Omega\backslash\{b\}\},$$}

for $ i=1,\dots,N,$
where $r$ is a constant taken to satisfy $0<r<\frac{m}{M}$. Clearly, the sets $U_i$ are non-empty open subsets of $H(\Omega)$ and $g_{r^{i-1}}\in U_i$, for every $i=1,\dots,N$.
Let $0 < \varepsilon < 1$ and put
$$W_0= \left\lbrace f \in H(\Omega): f(z) \neq 0\ \ \mbox{ and }\ \  \left\lvert\frac{f'(z)}{f(z)} - \frac{g'(z)}{g(z)}\right\rvert < \varepsilon, \forall z \in \Omega\backslash\{b\}\right\rbrace ,$$
for every $m\neq 0$, $g_m\in W_0$. Therefore, $W_0\cap U_i\neq\emptyset$ for $i=1,\dots,N$.

Let $n_0\in N(U_0,U_1\cap W_0,\dots,U_N\cap W_0)$, then
$$U_0\cap C_{\phi_1}^{-n_0}(U_1\cap W_0)\cap\dots\cap C_{\phi_N}^{-n_0}(U_N\cap W_0)\neq\emptyset,$$

so there is a holomorphic function $f\in U_0$, such that $f\circ\phi_{i,n_0}\in U_i$, for $i=1,\dots,N$.
Thus, we have that
$$\sup_{z\in K}|f(\phi_{N,n_0}(z))|\leq r^{N-1} M< r^{N-2} m$$ $$\leq \sup_{z\in K}|f(\phi_{N-1,n_0}(z))|\leq r^{N-2} M<\dots<m\leq\sup_{z\in K}|f(\phi_{1,n_0}(z))|\leq M<2M\leq\inf_{z\in K}|f(z)|$$
and
$$\sup_{z\in K}|f(\phi_{j,n_0}(z))|\leq r^{j-1}M<r^{j-1}m\leq r^{i-1}m\leq \inf_{z\in K}|f(\phi_{i,n_0}(z))|.$$
Hence, we obtain that the sets $K,\phi_{1,n_0}(K),\dots,\phi_{N,n_0}(K)$
are pairwise disjoint.


Now, since $f \in U_0$ and $f \circ \phi_{i,n_0} \in W_0 \cap U_i$, we have
that $f(z) \neq 0$ and $f \circ \phi_{i,n_0}(z) \neq 0$ for every $z\in \Omega\backslash\{b\}$, and for every $j =
0, 1, . . . , p,$ we have that
\begin{equation}\label{equ41}
\frac{1}{
2i\pi}\int_{\phi_{n_0}(\gamma_j)}\frac{f'(z)}{f(z)}dz =\frac{1}{2i\pi} \int_{\gamma_j}\frac{(f\circ \phi_{i,n_0})'(z)}{(f\circ \phi_{i,n_0})(z)}dz \geq \frac{1}{2i\pi}\int_{\gamma_j}\frac{g'(z)}{g(z)}dz - \varepsilon > 0.
\end{equation}

Presently, we will show that, for $i\in  \llbracket 1,N \rrbracket$, $\phi_{i,n_0}(L)$ is $\Omega$-convex. By \cite[Lemma 3.11]{C2} this implies that $\phi_{i,n_0}(K)$ is $\Omega$-convex, which will complete the proof.

Because $\phi_{i,n_0}$ is injective and $L$ has $p$ holes, the set $\phi_{i,n_0}(L)$ is a compact subset with exactly $p$ holes (see \cite[p. 276]{remmert}). Assume, by contradiction method,
that $\phi_{i,n_0}(L)$ is not $\Omega$-convex. Then one of these holes, that we denote by $O$,
is missing a point from $\mathbb{C}\backslash\Omega$. Since injective holomorphic functions
maps boundaries to boundaries, there is some $l \in \{0, 1, . . . , p\}$ such that
the Jordan curve $\phi_{i,n_0}(\gamma_l)$ is the boundary of $O$. Moreover, since $O$ contains
no point from $\mathbb{C}\backslash \Omega$, we have that
$$ind_{\phi_{i,n_0}}(\gamma_l)(\zeta) = 0,\ \textrm{for}\ \zeta\notin \Omega.$$
Now, the compact set $L$ is to the left of each curve $\gamma_j$, $j = 0, 1, . . . , p$.
Since injective holomorphic mappings preserve orientation, $\phi_{i,n_0}(L)$ must also
be to the left of the image curve $\phi_{i,n_0}(\gamma_l)$. This indicates that $\phi_{i,n_0}(\gamma_l)$ is negatively oriented. Since $f$ is holomorphic in a neighborhood of $O$, the integral
$$-\frac{1}{2i\pi}\int_{\phi_{i,n_0}(\gamma_l)}\frac{f'(z)}{f(z)}dz$$
equals the number of zeros of $f$ in $O$; but, by inequality \eqref{equ41}, that integral has a negative value, which is contradicting. Thus, we can conclude
that $\phi_{i,n_0}(L)$ is $\Omega$-convex. Consequently,
$n_0 \in \{n \in \mathbb{N}:$ $\phi_{1,n}(K),\dots,\phi_{N,n}(K)\textrm{ are }\Omega-convex\
\textrm{and}\ K,\phi_{1,n}(K),\dots,\phi_{N,n}(K)\textrm{ are pairwise disjoint}\}$.
Hence,
$N(U_0,W_0\cap U_1,\dots,W_0\cap U_N)\subset \{n\in \mathbb{N}:$ $\phi_n(K)$ is $\Omega$-convex and $K,\phi_{1,n}(K),\dots,\phi_{N,n}(K)$ are pairwise disjoint$\}$,
and the fact that $N(U_0,W_0\cap U_1,\dots,W_0\cap U_N) \in \mathcal{F}$ implies that
{\small
$$\{n \in \mathbb{N}:\ \phi_{1,n}(K),\dots,\phi_{N,n}(K)\textrm{ are }\Omega-convex\
\textrm{and}\ K,\phi_{1,n}(K),\dots,\phi_{N,n}(K)\textrm{ are pairwise disjoint}\}$$
}
belongs to $ \mathcal{F}.$

\normalsize
\end{proof}
\begin{lemma}\label{lem21}
Let $\Omega\subset \mathbb{C}$ be a domain, and let $\phi_1,\dots,\phi_N$ be $N$ injective holomorphic self-map of $\Omega$. Suppose that, for every $\Omega$-convex compact subset
$K$ of $\Omega$, the set
$$\{n \in \mathbb{N}:\ \phi_{i,n}(K)\textrm{ are }\Omega\textrm{-convex and }
K,\phi_{1,n}(K),\dots,\phi_{N,n}(K)\textrm{ are pairwise disjoint}\}$$
 is not empty. Then, for every connected $\Omega$-convex compact subset $K$ of $\Omega$ with more than two holes, the set
\small{
$$\{n\in \mathbb{N}\mbox{ : }K\cup \phi_{1,n}(K)\cup\dots \cup \phi_{N,n}(K) \mbox{ is }\Omega\mbox{-convex and }K, \ \phi_{1,n}(K),\dots,\phi_{N,n}(K)\mbox{ are pairwise disjoint}\}$$
}
contains the set
$$\{n\in \mathbb{N} \mbox{ : }\phi_{i,n}(K)\mbox{ are }\Omega\mbox{-convex and }K,\phi_{1,n}(K),\dots,\phi_{N,n}(K)
\mbox{ are pairwise disjoint}\}.$$
\end{lemma}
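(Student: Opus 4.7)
My plan is to verify that every $n$ in the smaller set already satisfies the additional condition that $K\cup \phi_{1,n}(K)\cup\cdots\cup \phi_{N,n}(K)$ be $\Omega$-convex, following the configuration analysis carried out in Lemma~\ref{lem12}. Writing $M = K\cup \phi_{1,n}(K)\cup\cdots\cup \phi_{N,n}(K)$, $A_0=K$, and $A_i=\phi_{i,n}(K)$ for $i\geq 1$, I first observe that the $A_i$ are pairwise disjoint, connected, $\Omega$-convex compacta, each with the same number of holes as $K$ (since each $\phi_{i,n}$ is injective holomorphic). A standard topological fact, relying on the connectedness of the $\hat{\mathbb{C}}$-complement of a simply connected compact, implies that every bounded complementary component (hole) $U$ of $M$ must be contained in a hole of at least one of the $A_j$.

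Next I would pick, for each such $U$, a minimal hole $H$ of some $A_j$ containing $U$, and use $\Omega$-convexity of $A_j$ to produce a point $a\in H\cap(\mathbb{C}\setminus\Omega)$. Since each $A_i\subset\Omega$, we have $a\in H\setminus M$; the objective is then to argue that $a$ lies in $U$ rather than in some other component of $H\setminus M$.

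For this, the strategy mirrors the three-case analysis of Lemma~\ref{lem12}: I classify whether $K'_n := \phi_{1,n}(K)\cup\cdots\cup\phi_{N,n}(K)$ is mutually external to $K$, lies inside a hole of $K$, or wraps around $K$ via one of its own holes. In the mutually-external case the conclusion is immediate, as all holes of $M$ are holes of $K$ or of some $\phi_{i,n}(K)$, and $\Omega$-convexity passes through directly. In the other two cases I invoke the hypothesis of the lemma, applied to an exhaustive sequence $(K_l)$ of $\Omega$-convex compacta containing $K$, to produce an auxiliary index $m$ whose associated images $K'_m$ provide the configurational ``room'' needed to force the witness $a$ into the specific component $U$. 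The assumption that $K$ has more than two holes is crucial here: each $A_i$ inherits that many holes from $K$, and the spare holes are exactly what fuel the casewise arguments of Lemma~\ref{lem12}.

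The main obstacle is this last step, where one must promote the existential conclusion of Lemma~\ref{lem12} (``some good $n$ exists'') to a universal statement (``every $n$ in the smaller set is good''). The delicate point is to exploit the ``more than two holes'' assumption together with the existence supplied by the standing hypothesis to rule out the pathological configurations in which the unique witness $a\in H\cap(\mathbb{C}\setminus\Omega)$ escapes into a component of $H\setminus M$ different from $U$; once this is handled, the conclusion that $M$ is $\Omega$-convex follows, and the desired set inclusion is established.
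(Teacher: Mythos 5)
Your proposal is not a complete proof, and the gap sits exactly at the decisive point. The inclusion to be proved is element-by-element: for each \emph{fixed} $n$ such that $\phi_{1,n}(K),\dots,\phi_{N,n}(K)$ are $\Omega$-convex and $K,\phi_{1,n}(K),\dots,\phi_{N,n}(K)$ are pairwise disjoint, one must show that $K\cup\phi_{1,n}(K)\cup\dots\cup\phi_{N,n}(K)$ is $\Omega$-convex. After locating, for a hole $U$ of the union, a hole $H$ of some piece with $U\subset H$ and a point $a\in H\cap(\mathbb{C}\setminus\Omega)$, you declare the step ``$a$ lies in $U$'' to be the main obstacle and say the conclusion follows ``once this is handled''; that step \emph{is} the lemma, so nothing has been proved. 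Moreover, the mechanism you propose for closing it cannot work in principle: since the conclusion concerns the fixed index $n$ only, invoking the standing hypothesis along an exhaustion $(K_l)$ to produce auxiliary indices $m$, as in Lemma~\ref{lem12}, is irrelevant --- the position of the sets $\phi_{i,m}(K)$ for $m\neq n$ has no bearing on whether the union at index $n$ is $\Omega$-convex. The three-case analysis of Lemma~\ref{lem12} is a device for \emph{choosing} a favourable index out of infinitely many; it cannot \emph{certify} a given one, which is what this lemma asks for, and the ``more than two holes'' assumption cannot be used as fuel for a choice that is not available.

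The paper's own proof takes a different, direct route, with no case analysis and no use of the standing hypothesis: for $n_0$ in the smaller set it asserts that, the sets being pairwise disjoint, every hole of $K\cup\phi_{1,n_0}(K)\cup\dots\cup\phi_{N,n_0}(K)$ is a hole of the $\Omega$-convex sets $\phi_{i,n_0}(K)$, and hence contains a point of $\mathbb{C}\setminus\Omega$. The nested configurations you were (rightly) worried about --- for instance $K$ sitting inside a hole $H$ of $\phi_{1,n}(K)$, where the only points of $\mathbb{C}\setminus\Omega$ in $H$ might lie inside holes of $K$, leaving the ``annular'' component of $H$ minus the filled-in $K$ without any such point --- are precisely what such an identification of the holes of the union with holes of the individual pieces must exclude. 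Your proposal neither establishes that identification nor rules out these configurations by any other means, so the argument as written does not reach the conclusion.
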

\begin{proof}
Let $K$ be a connected $\Omega$-convex compact subset with at least
two holes. Let $n_0 \in \{n\in \mathbb{N}:$ $\phi_{i,n}(K)$ are $\Omega$-convex and $K,\phi_{1,n}(K),\dots,\phi_{N,n}(K)$ are pairwise disjoint$\}$.
Then $\phi_{i,n_0}(K)$ is $\Omega$-convex and $K,\phi_{1,n_0}(K),\dots,\phi_{N,n_0}(K)$ pairwise disjoint. Since every hole of $K\cup \phi_{1,n_0}(K)\cup\dots \cup \phi_{N,n_0}(K)$ is a hole of every $\phi_{i,n_0}(K)$, $i=1,...,N$ which are $\Omega$-convex, then every hole of $K\cup \phi_{1,n_0}(K)\cup\dots \cup \phi_{N,n_0}(K)$ contains
a point of $\mathbb{C}\backslash\Omega$. Thus, $K\cup \phi_{1,n_0}(K)\cup\dots \cup \phi_{N,n_0}(K)$ is $\Omega$-convex. Hence, $n_0 \in \{n\in \mathbb{N}:$ $K\cup \phi_{1,n}(K)\cup\dots \cup \phi_{N,n}(K)$ is $\Omega$-convex and $K,\phi_{1,n}(K),\dots,\phi_{N,n}(K)$ are pairwise disjoint$\}$.
\end{proof}
Now we are ready to prove Theorem \eqref{t1}.
\begin{proof}[Proof of Theorem \ref{t1}]
The necessary implication was already shown in Theorem \eqref{thm22}. We will now prove the sufficient implication.
Assume that $(2)$ holds.
Let $U_0,\dots,U_N$ be non-empty open subsets of $H(\Omega)$. Then there exist
$\varepsilon > 0$, a compact subset $K\subset \Omega$, and $N+1$ functions $g_0,\dots,g_N\in H(\Omega)$ such that
$$U_0 \supset \{h \in H(\Omega):\ \|h - g_0\|_K < \varepsilon\}$$
$$\vdots$$
$$U_N\supset \{h \in H(\Omega):\ \|h - g_N\|_K < \varepsilon\}.$$
By enlarging $K$, we can assume that it is connected, $\Omega$-convex
and having at least two holes. Let $n_0 \in \{n\in \mathbb{N}:$ $\phi_{i,n}(K)$ are $\Omega$-convex and $K,\phi_{1,n}(K),\dots,\phi_{N,n}(K)$
are pairwise disjoint$\}$. By Lemma \eqref{lem21},
$K\cup \phi_{1,n_0}(K)\cup\dots\cup\phi_{N,n_0}(K)$ is $\Omega$-convex and $K,\phi_{1,n_0}(K),\dots,\phi_{N,n_0}(K)$ are pairwise disjoint. Then
the function $g_i \circ \phi_{i,n_0}^{-1}$ is holomorphic on $\phi_{i,n_0}(K)$ for every $i=1,\dots,N$, and $g_0$ is holomorphic on $K$.
Thus, by applying Runge's approximation theorem we can find a function $h \in H(\Omega)$
such that
$|h(z) - g_0(z)| < \varepsilon$ for $z \in K$, $|h(\zeta) - g_i(\phi_{i,n_0}^{-1}(\zeta))| < \varepsilon$ for $\zeta\in \phi_{i,n_0}(K)$ and $i=1,\dots,N$.
Hence,
$|h(\phi_{i,n_0}(z)) - g_i(z)| < \varepsilon$ for $z \in K$.
Thus, $h \in U_0$ and $C_{\phi_{i}}^{n_0}(h) \in U_i$ for $i = 1,\dots,N,$ which means that $U_0\cap C_{\phi_1}^{-n_0}U_1\cap\dots\cap C_{\phi_N}^{-n_0}U_N\neq\emptyset.$
This implies that $n_0 \in \{n \in \mathbb{N}:$ $U_0\cap C_{\phi_1}^{-n}U_1\cap\ldots\cap C_{\phi_N}^{-n}U_N\neq\emptyset\}$.
Consequently, the set
\small{
$$\{n\in \mathbb{N}\mbox{ : }\phi_{1,n}(K),\dots,\phi_{N,n}(K) \mbox{ are }\Omega\mbox{-
convex and }K,\phi_{1,n}(K),\dots,\phi_{N,n}(K)\mbox{ are pairwise disjoint}\}$$
}
includes in the set
$$\{n\in \mathbb{N}\mbox{ : } U_0\cap C_{\phi_1}^{-n}U_1\cap\dots\cap C_{\phi_N}^{-n}U_N\neq\emptyset\}$$
and thus by hereditary upward property of $\mathcal{F}$, we have that
$$\{n\in \mathbb{N}\mbox{ : }U_0\cap C_{\phi_1}^{-n}U_1\cap\dots\cap C_{\phi_N}^{-n}U_N\neq\emptyset\} \in \mathcal{F}.$$
Hence, $C_{\phi_1},\dots,C_{\phi_N}$ are disjoint $\mathcal{F}$-transitive on $H(\Omega)$.
\end{proof}
\noindent\textbf{Acknowledgment}. The authors are sincerely grateful to the anonymous referees for their careful reading, critical comments and valuable suggestions that contribute significantly to improving the manuscript during the revision.

\end{document}